\numberwithin{equation}{section}
\newtheorem{thm}{Theorem}[section]
\newtheorem{defn}{Definition}[section]
\newtheorem{lem}[thm]{Lemma}
\newtheorem{prop}[thm]{Proposition}
\newtheorem{cor}[thm]{Corollary}
\newtheorem{ill}{Illustration}
\newtheorem{conj}{Conjecture}
\def\ni{\noindent}
\def\N{\mathbb{N}}
\def\C{\mathbb{C}}
\title{\textbf{\sc On New Thue Colouring Concepts of Certain Graphs}}
\author{Johan Kok}
\affil{\small Tshwane Metropolitan Police Department\\ City of Tshwane, Republic of South Africa \\ {\tt kokkiek2@tshwane.gov.za}}
\author{Erika \v{S}krabul'\'{a}kov\'{a}}
\affil{\small Division of Applied Mathematics \\ Technical University of Ko\v{s}ice\\ Ko\v{s}ice, Slovakia\\ {\tt erika.skrabulakova@tuke.sk}}
\author{Naduvath Sudev}
\affil{\small Department of Mathematics\\ Vidya Academy of Science \& Technology \\ Thalakkottukara, Thrissur - 680501, India.\\ {\tt sudevnk@gmail.com}}
\date{}
\begin{document}
\maketitle

\begin{abstract}
Let $G$ be a simple undirected graph of order $n\ge 1$ and let $\varphi$ be a proper vertex colouring of $G$.  The Thue colouring of a graph is a colouring such that the sequence of vertex colours of any path of even and finite length in $G$ is non-repetitive.  The change in the Thue number, $\pi(G)$, as edges are iteratively removed from a graph $G$ is studied. Clearly $\pi(G-e) \le \pi(G),~ e \in E(G)$. As the edges are removed iteratively the resultant values of $\pi$ are used to construct a Thue sequence for the graph $G$. The notion of the  $\tau$-index denoted, $\tau(G)$, of a graph $G$ is introduced as well. $\tau(G)$ serves as  a measure for the efficiency of edge deletion to reduce the Thue chromatic number of a graph $G$.
\end{abstract}

\ni \textbf{Keywords:} Thue chromatic number, Thue number, Thue sequence, $\tau$-index.
\vspace{0.25cm}

\ni \textbf{AMS Classification Numbers:} 05C15, 05C55, 05D40.

\section{Introduction}

For general notation and concepts in graph and digraph theory, we refer to \cite{BM1,CL1,FH,JG,DBW}. Unless mentioned otherwise, all graphs we consider in this paper are simple, connected, finite and undirected graphs.

Recall that two vertices $u$ and $v$ are said to be \textit{adjacent} if there exist an edge between them and they are said to be \textit{reachable} from one to the other if there exist a finite path between them. In other words, two vertices $u$ and $v$ is said to be \textit{reachable} in $G$ if and only if the distance $d_G(u,v)$ is finite, where $d_G(u, v)$ is the length of shortest path between the vertices $u$ and $v$ in $G$. Since a vertex $v$ is inherently adjacent to itself,  the distance $d_G(u,u) = 0$. Consider the graph $G = G_1 \cup G_2$, the (disjoint) union of $G_1$ and $G_2$. Then, the distance between a vertex $u\in G_1$ and a vertex $v\in G_2$ is defined to be $d_G(u,v) = \infty$. 

A finite sequence $S = (q_1,q_2,q_3,\ldots ,q_t)=(q_i)_{i=1}^t$ of symbols of any alphabet is said to be \textit{non-repetitive} if for all subsequences $(r_1,r_2,r_3,\ldots,r_{2m}); m\ge 1, 2m \le t$ of symbols, the condition $r_i \ne r_{i+i}$ holds for all $1 \le i \le m$. 

The theory on graph colouring has become a fertile research area since the second half of nineteenth century. Several types of graph colouring patterns were introduced and studied over the years. In this paper, we study a particular type of vertex colouring of graphs, namely Thue Coloring. This type of graph colouring is named after the Norwegian mathematician Axel Thue, who introduced the notion of Thue colouring as follows.

\begin{defn}{\rm 
\cite{AT1} Let $G$ be a simple undirected graph of order $n\ge 1$ and let $\varphi$ be a proper vertex colouring of $G$. If the sequence of vertex colours of any path of even and finite length in $G$ is non-repetitive, then $\varphi$ is called a \textit{Thue colouring} of $G$. }
\end{defn} 

\begin{defn}{\rm 
\cite{AT1} The \textit{Thue chromatic number} of $G$, denoted by $\pi(G)$, is the minimum number of colours required for a Thue colouring of $G$.}
\end{defn} 


Certain studies on the Thue colouring of graphs have been done in \cite{AGHR,BC1,BW1,JDC1,JG1,ES2,SS1}. Motivated by these studies, we introduce new Thue colouring concepts and study these for of certain graphs.
 
\section{Thue Sequence of a Graph}

In this section, we first discuss the change in the Thue number, $\pi(G)$, as edges are iteratively removed from a graph $G$.  By definition, we have $\pi(K_1)=1$ and if $G =\bigcup_{i=1}^\ell G_i$, where each $G_i$ is a simple connected graph,  then $\pi(G)=\max\{\pi(G_i): 1 \le i\le \ell\}$. Clearly, $\pi(G-e) \le \pi(G),~ e \in E(G)$. Label the edges of $G,~ e_1, e_2, e_3,\ldots,e_{\epsilon}$, where $\epsilon$ is the number of edges in $G$. Then, we can define a Thue sequence as follows.

\begin{defn}{\rm 
Let $G=G^{\ast}_0$ and $G^{\ast}_i = G^*_{i-1}-e_j, e_j \in E(G_{i-1})$, then the sequence $(\pi(G^*_i))_{1=1}^{\epsilon}$ is called a \textit{Thue sequence} of $G$.}
\end{defn}

The following proposition provides a relation between Thue numbers of a graph $G$ and that of a graph obtained by joining some of their non-adjacent vertices.

\begin{prop}\label{Prop-2.1}
For a graph $G$, which is not complete, of order $n \ge 3$ we have $\pi(G) \le \pi(G+uv); u,v \in V(G)$.
\end{prop}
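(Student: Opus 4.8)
The plan is to show that every Thue colouring of $G+uv$ is automatically a Thue colouring of $G$; then an optimal (minimum-colour) Thue colouring of $G+uv$ witnesses the upper bound $\pi(G+uv)$ for $\pi(G)$. Write $H=G+uv$. If $u$ and $v$ happen to be adjacent in $G$ already, then $H=G$ and the asserted inequality is an equality, so I may assume $uv\notin E(G)$; such a pair exists because $G$ is not complete (and $n\ge 3$). Observe that $V(H)=V(G)$ while $E(G)\subsetneq E(H)$, with the single extra edge $uv$.

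First I would fix an optimal Thue colouring $\varphi$ of $H$, that is, one using exactly $\pi(H)$ colours. Since $E(G)\subseteq E(H)$, any two vertices adjacent in $G$ are adjacent in $H$ and hence receive distinct colours under $\varphi$; so $\varphi$, regarded as a colouring of $G$, is still proper.

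Next I would check the non-repetitiveness condition on $G$. The key observation is that adding the edge $uv$ cannot destroy any path already present in $G$, so every path $P$ of even and finite length in $G$ is also such a path in $H$. Consequently the sequence of $\varphi$-colours along $P$ is the colour sequence of an even finite path in $H$, and is therefore non-repetitive because $\varphi$ is a Thue colouring of $H$. Thus $\varphi$ is a Thue colouring of $G$ using $\pi(H)$ colours, which yields $\pi(G)\le\pi(H)=\pi(G+uv)$.

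This is really just the monotonicity remark $\pi(G-e)\le\pi(G)$ stated earlier in the section, re-read with $H=G+uv$ and $e=uv$, so I do not expect a genuine obstacle. The only point deserving care is to record explicitly that passing from $H$ to $G=H-uv$ only removes paths and only removes adjacency constraints, so that neither the properness requirement nor the non-repetitiveness requirement can be newly violated — which is exactly what the two verifications above establish.
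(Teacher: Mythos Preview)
Your argument is correct: the restriction of an optimal Thue colouring of $H=G+uv$ to $G$ is proper (fewer adjacency constraints) and non-repetitive (fewer paths), so $\pi(G)\le\pi(H)$. This is the clean monotonicity argument, and indeed it is essentially the content of the remark $\pi(G-e)\le\pi(G)$ applied with $G$ replaced by $H$ and $e=uv$.

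The paper, by contrast, proceeds by induction on the order $n$: it verifies the base case $n=3$ via $\pi(P_3)=2$, $\pi(C_3)=3$, assumes the inequality for all non-complete graphs on at most $k$ vertices, and then for $k+1$ vertices writes the graph as $G\cup K_1$ and analyses the effect of adding edges from the isolated vertex one at a time, observing that each such addition can raise $\pi$ by at most one. Your route is genuinely different and considerably more economical: it avoids induction entirely, does not need the explicit values $\pi(P_3)$ and $\pi(C_3)$, and makes transparent that the result is simply a special case of the subgraph monotonicity of $\pi$. The paper's inductive framework gains nothing here; if anything, your direct argument is the more natural proof and exposes that the hypotheses ``not complete'' and ``$n\ge 3$'' are only there to guarantee a non-adjacent pair $u,v$ exists, not because the inequality itself requires them.
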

\begin{proof}
Recall that $\pi(P_3)=2$ and $\pi(C_3)=3$ (see \cite{ES2,AT1}). Hence, the result holds for non-complete graphs on $3$ vertices. Now, assume that the results holds for all non-complete graphs $G$ on $4 \le n \le k$ vertices. Consider the non-complete graph $H = G \bigcup K_1$ on $k+1$ vertices. Now, by the induction hypothesis, the result $\pi(G) = \pi(G\cup K_1) \le \pi((G + uv)\cup K_1)$ holds. Hence, we need only to check the cases by adding the edges $uv_i$, where $u$ is the vertex of $K_1$ and $v_i \in V(G)$. Add the edge $uv_i$. Clearly, by re-colouring, we have either  $\pi(G+uv_i)=\pi(G)$ or $\pi(G+uv_i)=\pi(G)+1$. By immediate induction, the  second equation holds for any number of edges $uv_i, 1\le i\le k$, added to $G$ to obtain a new graph $G'$. If $G'$ is not complete, then again by induction, it follows that $\pi(G')\le \pi(G'+uv_j)$ for all possible $v_j\in V(G')$. Hence, the result holds for all graphs, that are not complete, on $k+1$ vertices. Then, the result follows by induction.
\end{proof} 

\begin{prop}
For graph $G$ of order $n \ge 1$ we have $\pi(G-e)\le \pi(G)$ for all $e \in E(G)$. 
\end{prop}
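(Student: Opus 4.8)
The plan is to observe that any Thue colouring of $G$ is already a Thue colouring of $G-e$, which immediately yields $\pi(G-e)\le \pi(G)$ by taking an optimal colouring of $G$. If $E(G)=\emptyset$ (in particular if $n=1$) the statement is vacuous, so assume $e\in E(G)$.

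First I would fix a Thue colouring $\varphi$ of $G$ using exactly $\pi(G)$ colours, and let $\varphi'$ be the colouring of $G-e$ obtained by restricting $\varphi$ to $V(G-e)=V(G)$. Then I would check the two defining requirements of a Thue colouring. For properness: since $E(G-e)=E(G)\setminus\{e\}\subseteq E(G)$, any two vertices adjacent in $G-e$ are adjacent in $G$ and hence received distinct colours under $\varphi$, so $\varphi'$ is a proper vertex colouring of $G-e$. For the non-repetitive condition: every path $P$ of even finite length in $G-e$ uses only edges of $G-e\subseteq G$, so $P$ is also a path of even finite length in $G$; as $\varphi$ is a Thue colouring of $G$, the colour sequence of $P$ under $\varphi$, which is the same as its colour sequence under $\varphi'$, is non-repetitive. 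Hence $\varphi'$ is a Thue colouring of $G-e$.

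Since $\varphi'$ uses at most $\pi(G)$ colours, we conclude $\pi(G-e)\le \pi(G)$. The only mild subtlety is that $G-e$ need not be connected; this causes no trouble because, by the convention recorded at the start of this section, $\pi$ of a disjoint union is the maximum of the Thue numbers of its components, and the restriction of $\varphi'$ to each component is a Thue colouring of that component using at most $\pi(G)$ colours. There is essentially no hard step: the whole content is the remark that $G-e$ is a spanning subgraph of $G$, so every even path of $G-e$ is an even path of $G$ and properness can only become easier to satisfy — this is precisely the ``clearly'' invoked repeatedly above.
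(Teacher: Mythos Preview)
Your argument is correct. The paper's own proof simply says ``the result follows by similar reasoning used in the proof of Proposition~\ref{Prop-2.1}'', i.e.\ it appeals to the inductive framework set up there for the companion inequality $\pi(G)\le \pi(G+uv)$. Your approach is genuinely different and more direct: you observe that an optimal Thue colouring of $G$ restricts to a Thue colouring of the spanning subgraph $G-e$, since both properness and the non-repetitive path condition are monotone under passing to subgraphs. This avoids induction entirely and makes transparent \emph{why} the inequality holds, whereas the paper's route packages the monotonicity inside an inductive shell. Either way the content is the same trivial observation; your version just exposes it explicitly.
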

\begin{proof}
The result follows by similar reasoning used in the proof of Proposition \ref{Prop-2.1}.
\end{proof}

Note that if a graph $G$ is a Hamiltonian graph, then $\pi(G)\ge 3$. Label the edges of $G$ in  injective manner by $e_1, e_2, e_3,\ldots ,e_{\epsilon}$. Also, let $G=G^{\ast}_0$ and $G^{\ast}_i = G^{\ast}_{i-1}-e_j,~ e_j \in E(G_{i-1})$, then the sequence $(\pi(G^{\ast}_i)_{1=1}^{\epsilon}$ is called a \textit{Thue sequence} of the edge deletion sequence. There are $\epsilon!$ such edge deletion sequences for $G$. Let $s_{\pi ,j}$, for all  $1\le j \le \epsilon!$ denote the edge deletion sequences.

\begin{defn}\label{Defn-T-IND}{\rm 
For an edge deletion sequence $(s_{\pi,j}), j\in \{1,2,3 \ldots ,\epsilon!\}$, define the \textit{$\tau$-index} by $\tau(s_{\pi,j})=\frac{1}{\epsilon+1}\sum\limits_{i=0}^{\epsilon}\pi(G^{\ast}_i)$.
}\end{defn}

Invoking the above definition, we can say that  the \textit{$\tau$-index} of a graph $G$ is the efficiency of edge deletion to reduce the Thue chromatic number of $G$. Hence, the $\tau$-index of a graph $G$ is defined as follows.

\begin{defn}\label{Defn-T-IND2}{\rm 
For a graph $G$ the $\tau$-index of $G,~ \tau(G) = \min\{\tau(s_{\pi ,j}):\forall s_{\pi ,j},1\le j \le \epsilon\,!\}$. 
}\end{defn}

\begin{ill}{\rm 
Consider the cycle $C_9$ and label the vertices by $v_1, v_2, v_3,\ldots ,v_9$ in a clockwise manner and the edges by $e_i=v_iv_{i+1}$ in the sense that $v_{n+1}=v_1$. Consider the edge deletion sequence $s_{\pi,1} = (e_1,e_2,e_3,\ldots,e_9)$. Deleting the edges sequentially results in the Thue sequence $(4,3,3,3,3,3,3,2,2,1)$. Hence, $\tau(s_{\pi,1})=\frac{27}{10}$.

Now, consider the edge deletion sequence $s_{\pi,2} = (e_3,e_6,e_2,e_4, e_6,e_8,e_1,e_5,e_7,e_9)$. The corresponding Thue sequence is given by $(4,3,3,2,2,2,2,2,2,1)$. Hence, $\tau(s_{\pi,2}) = \frac{23}{10}$. Over all $s_{\pi ,j},~ 1\le j \le 9!$ we find $\tau(C_9)=\frac{23}{10}$.}
\end{ill}

\begin{cor}
For a graph $G,~ \pi(G)=1$ if and only if $E(G)=\emptyset$. That is, the Thue chromatic number of a graph $G,$ not necessary connected, is $1$ if and only if it is an edgeless graph.
\end{cor}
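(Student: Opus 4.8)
The plan is to prove both implications directly from the definitions, using only the fact that, by definition, every Thue colouring of $G$ is in particular a proper vertex colouring of $G$.

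First I would treat the ``if'' direction. Suppose $E(G)=\emptyset$. Define $\varphi(v)=1$ for every $v\in V(G)$. Since $G$ has no edges, $\varphi$ is trivially a proper colouring. Moreover, every path in $G$ consists of a single vertex (a path of length $0$), so the colour sequence of any path of even finite length in $G$ is a one-term sequence; such a sequence is non-repetitive because the condition on subsequences $(r_1,\ldots,r_{2m})$ with $m\ge 1$ is vacuous for sequences of length less than $2$. Hence $\varphi$ is a Thue colouring of $G$, giving $\pi(G)\le 1$. Since $n\ge 1$, at least one colour is required, so $\pi(G)=1$.

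For the ``only if'' direction, suppose $\pi(G)=1$. Then $G$ admits a Thue colouring $\varphi$ that uses only the colour $1$, so $\varphi(v)=1$ for every $v\in V(G)$, and $\varphi$ is a proper colouring. If $G$ had an edge $uv$, then the adjacent vertices $u$ and $v$ would satisfy $\varphi(u)=\varphi(v)$, contradicting properness. Hence $E(G)=\emptyset$.

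Finally, since the corollary also asserts the equivalence for graphs that need not be connected, I would note that none of the above uses connectedness; alternatively, one may invoke the identity $\pi\bigl(\bigcup_i G_i\bigr)=\max_i \pi(G_i)$ recorded earlier, observing that this maximum equals $1$ if and only if $\pi(G_i)=1$ for every component, i.e.\ if and only if each component is a single vertex, i.e.\ if and only if $E(G)=\emptyset$. I do not expect any genuine obstacle here; the only step needing a moment's care is the vacuous verification that the colour sequence of a trivial (length-$0$) path is non-repetitive, which is precisely why, in the ``if'' direction, the Thue condition imposes nothing beyond properness.
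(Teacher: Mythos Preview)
Your proof is correct and essentially matches the paper's. The only stylistic difference is in the ``if'' direction: the paper invokes the identity $\pi\bigl(\bigcup_i G_i\bigr)=\max_i \pi(G_i)$ together with $\pi(K_1)=1$, whereas you verify directly that the constant colouring is a Thue colouring (and then mention the max identity as an alternative); the ``only if'' direction via properness is identical.
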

\begin{proof}
Let $E(G) = \emptyset$. The result is trivially true for $K_1$.  For an edgeless graph on $n\ge 2$ vertices the colouring $\varphi: v_i \mapsto c_1, \forall~ v_i \in V(G)$ is a proper minimum vertex colouring of $G$. Also, since no distinct vertices of $G$ are reachable, $\pi(G) = \max\{\underbrace{\pi(K_1),\pi(K_1),\ldots \pi(K_1)}_{n-entries}\}$ and hence the result holds.

Now, let $\pi(G) =1$ and assume that $G$ has at least one edge say, $uv$. Since a Thue colouring is a proper vertex colouring $\varphi$ of $G$, the chromatic number $\chi(G) \ge 2$ and hence $\pi(G) \ge 2$. This is a contradiction to the hypothesis and therefore $G$ is edgeless.
\end{proof}

Since $0! = 1$ the edgeless graph has the default edge deletion sequence $s_{\pi,\emptyset}$.  For a trivial graph $G$, it follows from Definitions \ref{Defn-T-IND} and \ref{Defn-T-IND2} that $\tau(s_{\pi,\emptyset})=\tau(G)=1$.

\begin{thm}
For all graphs $G$ with $\epsilon\ge 1$, we have $\frac{3}{2}\le \min\{\tau(G)\}<2$.
\end{thm}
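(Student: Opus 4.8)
The plan is to bound $\tau(s_{\pi,j})$ from below for \emph{every} graph $G$ with $\epsilon\ge 1$ and \emph{every} edge deletion sequence, which will give $\min\{\tau(G)\}\ge\frac32$, and then to exhibit a family of graphs whose $\tau$-index lies in $[\frac32,2)$, which settles the strict upper inequality (and shows the lower bound is attained).

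For the lower bound I would use two facts already available: a Thue colouring is in particular a proper colouring, so $\pi(H)\ge\chi(H)$; and, by the Corollary above, $\pi(H)=1$ exactly when $E(H)=\emptyset$, so $\pi(H)\ge\chi(H)\ge 2$ whenever $H$ has an edge. Fix any edge deletion sequence $s_{\pi,j}$, giving $G=G^{\ast}_0,G^{\ast}_1,\dots,G^{\ast}_{\epsilon}$ with $|E(G^{\ast}_i)|=\epsilon-i$. Then $G^{\ast}_i$ has at least one edge for $0\le i\le\epsilon-1$, forcing $\pi(G^{\ast}_i)\ge 2$ there, while $G^{\ast}_{\epsilon}$ is edgeless with $\pi(G^{\ast}_{\epsilon})=1$. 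Summing yields
\[
\sum_{i=0}^{\epsilon}\pi(G^{\ast}_i)\ \ge\ 2\epsilon+1,\qquad\text{so}\qquad \tau(s_{\pi,j})\ =\ \frac{1}{\epsilon+1}\sum_{i=0}^{\epsilon}\pi(G^{\ast}_i)\ \ge\ \frac{2\epsilon+1}{\epsilon+1}\ =\ 2-\frac{1}{\epsilon+1}\ \ge\ \frac32,
\]
the last step because $\epsilon\ge 1$. As this holds for every $j$, we get $\tau(G)\ge\frac32$, hence $\min\{\tau(G)\}\ge\frac32$.

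For the strict upper bound I would take $G=K_{1,\epsilon}$, the star (so $K_2=K_{1,1}$). One checks $\pi(K_{1,\epsilon})=2$: it is bipartite, so a proper $2$-colouring exists, and its only paths of even positive length are the length-$2$ paths leaf--centre--leaf, whose colour sequence $(b,a,b)$ is non-repetitive. Since $\pi$ is non-increasing under edge deletion, each $G^{\ast}_i$ with $0\le i\le\epsilon-1$ satisfies $2\le\pi(G^{\ast}_i)\le\pi(G)=2$, so $\pi(G^{\ast}_i)=2$ there and $\pi(G^{\ast}_{\epsilon})=1$; thus every deletion sequence gives the Thue sequence $(2,2,\dots,2,1)$ and
\[
\tau(K_{1,\epsilon})\ =\ \frac{2\epsilon+1}{\epsilon+1}\ =\ 2-\frac{1}{\epsilon+1}\ <\ 2 .
\]
In particular $\tau(K_2)=\frac32$, so $\min\{\tau(G)\}\le\frac32<2$, and combining with the lower bound gives $\frac32\le\min\{\tau(G)\}<2$ (indeed the minimum equals $\frac32$).

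I do not anticipate a genuine obstacle; the two points needing care are (i) justifying $\pi\ge 2$ for every non-edgeless graph occurring in a deletion sequence — via the Corollary together with the implication ``Thue colouring $\Rightarrow$ proper colouring'' — since this is precisely what pins $\sum_i\pi(G^{\ast}_i)$ from below, and (ii) verifying that the witness star has $\pi=2$ and that monotonicity of $\pi$ under edge deletion forces its Thue sequence to be $(2,\dots,2,1)$, so that its $\tau$-index equals exactly $2-\frac1{\epsilon+1}$.
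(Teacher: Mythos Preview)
Your proposal is correct and follows essentially the same route as the paper: both establish the lower bound by observing that every intermediate graph $G^{\ast}_i$ with at least one edge has $\pi(G^{\ast}_i)\ge 2$ (so the Thue sequence $(2,2,\dots,2,1)$ of the star is termwise minimal), and both use the stars $K_{1,\epsilon}$ as the witness family giving $\tau(K_{1,\epsilon})=\frac{2\epsilon+1}{\epsilon+1}<2$ with equality $\frac{3}{2}$ at $\epsilon=1$. Your write-up is in fact more explicit than the paper's in justifying the lower bound via the Corollary and the monotonicity of $\pi$ under edge deletion.
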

\begin{proof}
First, note that $\tau (P_2) = \frac{3}{2}$ and hence we have $\min\{\tau(G)\}\ge \frac{3}{2}$.
Also,  note that the unique Thue sequence of the star $K_{1,n}$ is $(\underbrace{2, 2, 2,\ldots , 2}_{n-entries},1)$. Since $\pi(G^{\ast}_i) \in \N,~ 1\le i\le \epsilon(G)$, no other Thue sequence of any other graph $G$ with $n$ edges can improve on the minimality of $\tau(K_{1,n})$. Since $\lim_{n\to \infty}\frac{2n+1}{n+1} =2$, the result $\frac{3}{2}\le \min\{\tau(G)\}< 2$ follows.
\end{proof}

Observe that for all graphs $G$ with $\epsilon(G) \ge 2$ the inequality $\tau(S_{1,n})<\tau(G)$ holds.

\subsection{$\tau$-Index of Paths and Cycles}

From \cite{ES2} it is known that $\pi(P_1) = 1,~ \pi(P_2) = \pi(P_3) = 2$ and $\pi(P_n) =3$ for $n\ge 4$. Hence, the most efficient way to reduce the Thue number of paths through edge deletion is to delete edges which create $P_1, P_2, P_3$ components. For paths $P_n, ~ n \ge 4$, the following result holds.

\begin{prop}\label{Prop-2.5}
For a path $P_n,~n \ge 4$, the $\tau$-index is given by 
\begin{equation*} 
\tau(P_n) =
\begin{cases}
\frac{7n-6}{3n}; & n=3t,~ t\in\N,\\
\frac{7n-5}{3n}; & n = 3t-1,~t \in \N,\\
 \frac{7n-4}{3n}; & n = 3t-2,~ t \in \N.
\end{cases}
\end{equation*}
\end{prop}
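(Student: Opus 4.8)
The plan is to compute $\tau(P_n)$ by determining, for every edge‑deletion sequence, the sum $\sum_{i=0}^{\epsilon}\pi(G^*_i)$ with $\epsilon=n-1$, and then to identify the deletion order that minimises it. The governing principle, already noted in the paragraph preceding the statement, is that $\pi(P_m)=1$ for $m=1$, $\pi(P_m)=2$ for $m\in\{2,3\}$, and $\pi(P_m)=3$ for $m\ge 4$, together with the additive rule $\pi\bigl(\bigcup_i G_i\bigr)=\max_i\pi(G_i)$. Thus after each deletion the current forest is a disjoint union of paths, and its $\pi$‑value is $3$ as long as some component has at least $4$ vertices, drops to $2$ once every component has at most $3$ vertices but some edge remains, and equals $1$ only when the graph is edgeless. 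Consequently the whole problem reduces to the following combinatorial question: starting from $P_n$ and removing one edge at a time, how can one (i) reach as quickly as possible a state in which no component has $\ge 4$ vertices, and (ii) thereafter make the $\pi=2$ phase as short as possible, i.e. destroy all remaining edges as fast as one can while keeping components small?

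First I would analyse phase (i). Write $n=3t-r$ with $r\in\{0,1,2\}$. Removing an edge from $P_n$ splits one path of length $\ell$ into two of lengths $a$ and $\ell-1-a$; to make \emph{all} components have order $\le 3$ one must place cuts so that the $n$ vertices are partitioned into blocks of size $\le 3$, and the minimum number of such cuts is $\lceil n/3\rceil-1$. A short argument shows this bound is achievable by always cutting a longest remaining component roughly in thirds, and that along such an optimal prefix every intermediate graph still contains a component of order $\ge 4$ (until the very last cut of the phase), so $\pi=3$ on exactly the first $\lceil n/3\rceil-1$ terms $\pi(G^*_1),\dots$ — together with $\pi(G^*_0)=3$, that is $\lceil n/3\rceil$ values equal to $3$. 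For phase (ii): once we have $\lceil n/3\rceil$ path‑components of sizes in $\{1,2,3\}$, summing to $n$, the number of remaining edges is $n-\lceil n/3\rceil=\lfloor 2n/3\rfloor$; each further deletion removes exactly one edge and can never create a large component, so $\pi=2$ persists for every deletion except the last one, giving $\lfloor 2n/3\rfloor-1$ values equal to $2$, followed by a single terminal $1$. I would then check that this allocation is simultaneously optimal — one cannot shorten the $\pi=3$ phase below $\lceil n/3\rceil$ terms, and given that, the $\pi=2$ phase has a forced length — so the minimum of $\sum\pi(G^*_i)$ equals $3\lceil n/3\rceil+2(\lfloor 2n/3\rfloor-1)+1$, and dividing by $\epsilon+1=n$ yields $\tau(P_n)$.

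Finally I would carry out the three residue computations. For $n=3t$: $3t+2(2t-1)+1=7t-1=\tfrac{7n-3}{3}$… wait — here one must be careful: $\lceil n/3\rceil=t$, $\lfloor 2n/3\rfloor=2t$, giving sum $3t+2(2t-1)+1=7t-1$ and $\tau=\frac{7t-1}{3t}=\frac{7n-3}{3n}$; for $n=3t-1$: $\lceil n/3\rceil=t$, $\lfloor 2n/3\rfloor=2t-1$, sum $3t+2(2t-2)+1=7t-3$, $\tau=\frac{7t-3}{3t}$; for $n=3t-2$: $\lceil n/3\rceil=t-1$, $\lfloor 2n/3\rfloor=2t-2$, sum $3(t-1)+2(2t-3)+1=7t-8$, $\tau=\frac{7t-8}{3t}$. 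These should be rewritten in terms of $n$ to match the three cases in the statement; the arithmetic of re‑expressing $7t-1,\,7t-3,\,7t-8$ via $n=3t-r$ is routine but is the place where the exact numerators $7n-6,\,7n-5,\,7n-4$ (and any off‑by‑$k$ discrepancy) must be reconciled, so a careful bookkeeping of the boundary cases $t=1,2$ is warranted.

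The main obstacle I anticipate is \emph{proving the lower bound}, i.e. that no cleverer deletion order beats the ``cut longest component in thirds, then mop up'' strategy. The additive/max behaviour of $\pi$ makes the upper bound a matter of exhibiting one good sequence, but to argue optimality one must show that (a) any sequence keeps $\pi\ge 3$ until at least $\lceil n/3\rceil-1$ edges have been removed — a vertex‑counting argument on when a $\ge 4$‑vertex component can first disappear — and (b) once in the $\pi=2$ regime, $\pi\ge 2$ necessarily holds on all but the last deletion since only one edge leaves per step. Packaging (a) cleanly — especially verifying that the optimal prefix does not accidentally enter the $\pi=2$ regime one step early for small $t$ — is the delicate point, and it is exactly what governs whether the numerator is $7n-6$, $7n-5$, or $7n-4$ in each residue class.
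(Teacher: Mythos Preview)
Your strategy coincides with the paper's: cut the path into blocks of size at most $3$ (phase~(i)), then remove the remaining edges (phase~(ii)). The paper, in fact, only exhibits one deletion sequence for the case $n=3t$ and computes its sum, leaving both the other residues and the question of optimality to the reader; your explicit lower-bound argument via the pigeonhole observation that $j<\lceil n/3\rceil-1$ deletions leave at most $j+1$ components, hence at least one of order $\ge 4$, is therefore a genuine and welcome addition.

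The execution, though, has a systematic off-by-one. After the $(\lceil n/3\rceil-1)$-st cut \emph{itself} every component already has order $\le 3$, so the states $G^*_0,\dots,G^*_{\lceil n/3\rceil-2}$ are exactly the ones with $\pi=3$ --- that is $\lceil n/3\rceil-1$ threes, not $\lceil n/3\rceil$ --- and correspondingly there are $\lfloor 2n/3\rfloor$ twos (the states $G^*_{\lceil n/3\rceil-1},\dots,G^*_{n-2}$), not $\lfloor 2n/3\rfloor-1$. With the corrected tally the sum is
\[
3\bigl(\lceil n/3\rceil-1\bigr)+2\lfloor 2n/3\rfloor+1,
\]
and for $n=3t$ this gives $3(t-1)+4t+1=7t-2=(7n-6)/3$, matching the statement. (Sanity check on $P_6$: cutting $e_3$ yields the sequence $(3,2,2,2,2,1)$, sum $12$, $\tau=2=(7\cdot 6-6)/18$; your count would have given $13/6$.) You also slipped on the ceiling in the last case: for $n=3t-2$ one has $\lceil n/3\rceil=t$, not $t-1$; redoing that case gives sum $7t-6=(7n-4)/3$, again matching. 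Once these counts are fixed the numerators $7n-6,\,7n-5,\,7n-4$ fall out directly, and the ``reconciliation'' paragraph you flagged becomes unnecessary.
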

\begin{proof}
Consider a path $P_n,~ n \ge 4$. Label the vertices and the edges of the path from left to right, $v_1,v_2,v_3,\ldots ,v_n$ and $e_1,e_2,e_3,\ldots , e_{n-1}$, respectively.

Let $n=3t, t\in \N$. First, consider the partial edge deletion sequence $s_{\pi,1} = (e_3,e_6,e_9,\ldots , e_{3(t-2)})$. Deleting these edges sequentially results in the partial Thue sequence $(\underbrace{3,3,3,\ldots ,3}_{(t-1)-entries})$. Hence, the partial sum is equal to $3(t-1) =\frac{3n}{3}-3$. 

Now, consider the partial edge deletion sequence $s_{\pi,2} = (e_1,e_4, e_7,\ldots , e_{3(t-1)})$. Deleting these edges sequentially results in the partial Thue sequence $(\underbrace{2,2,2,\ldots ,2}_{2t-entries})$. The corresponding partial sum is equal to $4t =\frac{4n}{3}$. So far, we have deleted $(t-2) + 2t = 3t -2$ edges. Deleting the last edge adds the \textit{partial sum}, by $1$. From Definition \ref{Defn-T-IND}, it follows that $\tau(P_n)=\frac{1}{n}(\frac{3n}{3}-3+\frac{4n}{3}+1)=\frac{7n-6}{3}$.

\ni The other cases will also follow by a similar counting technique.
\end{proof}

The result of Proposition \ref{Prop-2.5}  can be re-written as $\tau(P_n)=\frac{1}{n}\left( \frac{7(n+i)}{3} -2(i+1)\right),~ i \in \{0,1,2\}$ and $n = 3t - i,~ t \in \N$. Let $\C=\{5,7,9,10,14,17\}$. From \cite{JDC1}, we have $\pi(C_n)=4,~ n\in\C$, else $\pi(C_n)=3$. 

\begin{prop}\label{Prop-2.6}
Consider a cycle $C_n, n\ge 4$, then 
\begin{equation*} 
\tau(C_n) =
\begin{cases}
\frac{7n-i+6}{3(n+1)}; & i \in \{0,1,2\}~~ \text {and}~~ n=3t-i,~t\in\mathbb{N}, n\in\C,\\
\frac{7n-i+3}{3(n+1)}; & \text {otherwise}.
\end{cases}
\end{equation*}
\end{prop}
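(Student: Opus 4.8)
The plan is to remove a single edge and reduce everything to Proposition \ref{Prop-2.5} together with the known values of $\pi(C_n)$. Since $C_n$ has exactly $n$ edges, every edge deletion sequence $s_{\pi,j}$ of $C_n$ produces a Thue sequence $\bigl(\pi(G^{\ast}_0),\pi(G^{\ast}_1),\dots,\pi(G^{\ast}_n)\bigr)$ of $n+1$ terms with fixed leading term $\pi(G^{\ast}_0)=\pi(C_n)$, which by \cite{JDC1} equals $4$ when $n\in\C$ and $3$ otherwise. The structural fact I would exploit is that $C_n-e\cong P_n$ for \emph{every} $e\in E(C_n)$ (equivalently, $C_n$ is edge-transitive); hence, whichever edge is deleted first, $G^{\ast}_1\cong P_n$, and the remaining $n-1$ deletions form an edge deletion sequence of $P_n$.

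It follows that the tail $\bigl(\pi(G^{\ast}_1),\dots,\pi(G^{\ast}_n)\bigr)$ is, term for term, a Thue sequence of $P_n$, with $G^{\ast}_1$ in the role of $(P_n)^{\ast}_0$, and conversely every Thue sequence of $P_n$ occurs this way. As $\epsilon(P_n)+1=n$, Definition \ref{Defn-T-IND} gives $\sum_{i=0}^{n-1}\pi\bigl((P_n)^{\ast}_i\bigr)=n\,\tau(s')$ for the path edge deletion sequence $s'$ it determines, so the minimum of $\sum_{i=1}^{n}\pi(G^{\ast}_i)$ over all continuations is exactly $n\,\tau(P_n)$. Since the leading term is fixed and the first edge choice is immaterial, the two minimisations decouple and
\[
\tau(C_n)=\frac{1}{n+1}\bigl(\pi(C_n)+n\,\tau(P_n)\bigr).
\]
For $n\ge 4$ one has $\pi(P_n)=3$, so Proposition \ref{Prop-2.5}, in its rewritten form, gives $n\,\tau(P_n)=\tfrac{7(n+i)}{3}-2(i+1)$ for $n=3t-i$, $i\in\{0,1,2\}$. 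Plugging in $\pi(C_n)=3$ for $n\notin\C$ and $\pi(C_n)=4$ for $n\in\C$, a routine simplification in each residue class yields the two cases of the statement; as a check, $C_9$ gives $\tfrac{1}{10}(4+19)=\tfrac{23}{10}$, matching the Illustration.

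The only genuinely substantive step, everything else being bookkeeping, is the \emph{prefix decomposition} above: one must verify that the first deletion necessarily produces a copy of $P_n$ --- this is precisely the fact $C_n-e\cong P_n$, and it is where $n\ge 4$ is needed, since for $n=3$ one lands on $P_3$ with $\pi(P_3)=2$ and the reduction breaks --- and that the residual problem is literally the path problem, with optimum $n\,\tau(P_n)$ by the definition of the $\tau$-index. Apart from that, the main thing to track is the index shift between $\epsilon(C_n)=n$ and $\epsilon(P_n)=n-1$, so that the normalising denominators $n+1$ and $n$ are not confused; with that observed, the proposition follows by inserting Proposition \ref{Prop-2.5} into the displayed identity.
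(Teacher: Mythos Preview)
Your proposal is correct and follows essentially the same approach as the paper: both arguments hinge on the single observation that $C_n-e\cong P_n$ for any $e\in E(C_n)$, after which the result reduces to Proposition~\ref{Prop-2.5} combined with the known values of $\pi(C_n)$. The paper's proof is in fact just that one sentence; your version merely spells out the bookkeeping (the prefix decomposition $\tau(C_n)=\tfrac{1}{n+1}\bigl(\pi(C_n)+n\,\tau(P_n)\bigr)$ and the index shift) that the paper leaves implicit.
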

\begin{proof}
Observe that $C_n-e,~ e \in V(C_n)$ is the path $P_n$, and hence the result follows immediately.
\end{proof}

Observe that for the corresponding line graphs, $\tau(L(P_n))=\tau(P_{n-1})$ and $\tau(L(C_n))= \tau(C_n)$. In respect of non-repetitive edge colouring, the \textit{Thue chromatic index} or \textit{Thue index} for brevity, denoted $\pi'(G)$ is defined in (see \cite{AGHR}) as a minimal, proper, non-repetitive edge colouring. Since edge colouring of the edges of a graph $G$  is equivalent to vertex colouring of the vertices of the line graph $L(G)$, all known results for $\pi(G), \pi'(G)$ and $\tau(G)$ are easily transversable between graphs and line graphs.

\subsection{$\tau$-Index of Complete Graphs}

It is known that $\pi(K_{n\ge 1}) = n$, (see \cite{ES2}). We now study the $\tau$-index of complete graphs $K_n,~ n \ge 3$. The basis for the proof of the next result through induction, can be illustrated as follows. Consider the complete graph $K_3$ and label the edges $e_1,e_2,e_3$. Deleting these edges sequentially results in the Thue sequence $(3,2,2,1)$. Extend $K_3$ to $K_4$ and remove the same edges sequentially. The partial Thue sequence is now given by $(4,3,3,2)$ and the resultant graph is $K_{1,3}$. Hence, on deleting $2$ additional edges the partial Thue sequence becomes $(4,3,3,2,2,2)$. On deleting the last edge, we get the complete Thue sequence $(4,3,3,2,2,2,1)$.

\begin{prop}
For $n\ge 2$, the $\tau$-index of a complete graph $K_n$ is
$\tau(K_n)=\frac{2}{n(n-1)}(\sum\limits_{i=0}^{n-2}(i+1)(n- i) +1)$. 
\end{prop}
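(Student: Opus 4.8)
The plan is to exhibit one edge-deletion sequence of $K_n$ whose $\tau$-index equals the right-hand side of the displayed formula, and then to prove that no edge-deletion sequence can do better; by Definition~\ref{Defn-T-IND2} this common value is then $\tau(K_n)$. The engine of the computation is an elementary fact I would use repeatedly: if $v$ is a universal vertex of a graph $H$ (adjacent to every other vertex) and $|V(H)|\ge 2$, then $\pi(H)=\pi(H-v)+1$. Indeed, extending an optimal Thue colouring of $H-v$ by giving $v$ a fresh colour keeps the colouring proper and non-repetitive, because the colour of $v$ occurs exactly once along any path whereas in a repetitive colour sequence every colour occurs an even number of times; conversely, in any Thue colouring of $H$ the colour of $v$ differs from all the others, so at least $\pi(H-v)+1$ colours are used. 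Iterating, the join of $K_s$ with any graph $G$ has Thue chromatic number $s+\pi(G)$.

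For the construction I would argue by induction on $n$, building the deletion order recursively, in the spirit of the $K_3$-to-$K_4$ discussion preceding the statement. For $K_2$, delete the single edge, with Thue sequence $(2,1)$. For $n\ge 3$, fix a vertex $v_n$; first delete the $\binom{n-1}{2}$ edges of $K_{n-1}=K_n-v_n$ in the order prescribed recursively for $K_{n-1}$, and afterwards delete the remaining $n-1$ edges incident with $v_n$ in any order. Throughout the first phase $v_n$ is universal, so by the fact above every value of $\pi$ is one larger than the corresponding value for $K_{n-1}$; by induction the Thue sequence of $K_{n-1}$ is $(\underbrace{n-1}_{1},\underbrace{n-2}_{2},\dots,\underbrace{2}_{n-2},1)$, so phase one contributes $(\underbrace{n}_{1},\underbrace{n-1}_{2},\dots,\underbrace{3}_{n-2},2)$. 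During phase two the Thue number is $2$ after each of the first $n-2$ deletions (the graph being a nonempty star plus isolated vertices, since $\pi(K_{1,m})=2$ for $m\ge 1$) and is $1$ after the last (the graph being edgeless), so phase two contributes $(\underbrace{2}_{n-2},1)$. Concatenating, the Thue sequence of $K_n$ is $(\underbrace{n}_{1},\underbrace{n-1}_{2},\dots,\underbrace{2}_{n-1},1)$; that is, the value $n-i$ occurs $i+1$ times for $i=0,1,\dots,n-2$, and then $1$ occurs once. This has $\sum_{i=0}^{n-2}(i+1)+1=\binom{n}{2}+1=\epsilon+1$ terms summing to $\sum_{i=0}^{n-2}(i+1)(n-i)+1$, so by Definition~\ref{Defn-T-IND} its $\tau$-index is $\frac{1}{\epsilon+1}\big(\sum_{i=0}^{n-2}(i+1)(n-i)+1\big)$, the asserted quantity.

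For minimality, note first that since edge deletion never increases $\pi$, every Thue sequence is non-increasing with first term $\pi(K_n)=n$ and last term $\pi(\overline{K_n})=1$; hence it is enough to show that the sequence just built is termwise smallest, i.e. that after deleting any $k<\binom{\ell+2}{2}$ edges of $K_n$ one still has $\pi\ge n-\ell$. If $D$ denotes the set of deleted edges and $d$ the number of its endpoints, the $n-d$ untouched vertices are universal, so $\pi(K_n-D)=(n-d)+\pi(K_d-D')$, where $D'$ is the restriction of $D$ to its endpoints. Thus the claim reduces to: deleting fewer than $\binom{\ell+2}{2}$ edges of $K_d$, every vertex being hit, cannot lower the Thue number below $d-\ell$.

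This reduction is where the real work lies, and it is exactly the point at which the non-repetitiveness of the colouring (not merely its properness) is needed: the analogue for the ordinary chromatic number is false, since deleting a perfect matching of $K_d$ (with $d$ even) drops $\chi$ all the way to $d/2$ while $\pi$ stays far higher --- already $\pi(C_4)=\pi(K_4-\text{perfect matching})=3$. I would prove the reduced claim by induction on $\ell$: in the inductive step one may assume $K_d-D'$ has no universal vertex (otherwise remove it and apply the induction), so $D'$ meets every vertex; any triangle-free, and hence "cheap", merging of colour classes within $D'$ then exhibits an induced $C_4$ in $K_d-D'$ whose four vertices are joined to a near-clique on the remaining $d-4$ vertices, and combining $\pi(C_4)=3$ with that clique forces at least $d-\ell$ colours, a contradiction. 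The only economical way to achieve $\pi\le d-\ell-1$ is therefore for $D'$ to contain a clique $K_{\ell+2}$, which already accounts for $\binom{\ell+2}{2}$ edges. Making this case analysis precise is the substance of the argument; everything else is the bookkeeping above.
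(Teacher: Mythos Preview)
Your construction of the edge-deletion sequence, peeling off $K_{n-1}$ first and then the residual star, is exactly the paper's approach; the paper carries out the same induction (extend the $K_m$ sequence to $K_{m+1}$ by appending the star phase) and arrives at the same Thue sequence. Your universal-vertex lemma $\pi(H)=\pi(H-v)+1$ makes explicit what the paper uses implicitly when it writes ``$(4,3,3,2)$'' for $K_4$ after deleting the $K_3$-edges, and it is a genuine improvement in clarity.

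Where you go beyond the paper is in attempting to prove minimality over all $\epsilon!$ deletion orders. The paper's proof does not address this at all: it exhibits one Thue sequence, computes its average, and stops --- so by Definition~\ref{Defn-T-IND2} the paper has, strictly speaking, only established an upper bound on $\tau(K_n)$. You are right that optimality is the real content of the proposition.

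However, your minimality argument has a genuine gap. The reduction via universal vertices to the claim ``$|D'|<\binom{\ell+2}{2}$ and $D'$ hits every vertex of $K_d$ implies $\pi(K_d-D')\ge d-\ell$'' is correct and well motivated, and your remark that the clique bound alone fails (the perfect-matching example) correctly isolates where non-repetitiveness must enter. But the inductive step you sketch is not an argument: the phrases ``any triangle-free \ldots\ merging of colour classes within $D'$ then exhibits an induced $C_4$'' and ``near-clique on the remaining $d-4$ vertices'' are not defined, the displayed $C_4$ need not interact with the rest of the graph in a way that forces $d-\ell$ colours, and the conclusion ``the only economical way \ldots\ is for $D'$ to contain a clique $K_{\ell+2}$'' is asserted rather than deduced. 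For $\ell=1$ the case analysis is small (two shapes for $D'$, and $\pi(C_4)=3$ handles the matching case), but already for $\ell=2$ one must control, for example, $K_d$ minus a $5$-cycle or minus a path $P_5$, and your sketch gives no mechanism for this. As you yourself write, ``making this case analysis precise is the substance of the argument'' --- and that substance is missing.
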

\begin{proof}
We proceed by mathematical induction. The result trivially holds for $K_2$ and $K_3$. Now, assume that it holds for $K_t, ~ 2\le t \le m$. Hence, the Thue sequence of $K_m$ is given by $(m,m-1,m-1,m-2,m-2,m-2,\ldots ,\underbrace{(m-\ell),(m-\ell)\ldots ,(m-\ell)}_{(\ell+1)-entries},\\ \ldots,\underbrace{2,2,2,\ldots ,2}_{(m-1)-entries},1)$. 

Now, consider the complete graph $K_{m+1}$ and delete the edges of the subgraph $K_m$ as before and then delete the edges of the resultant graph $K_{1,m}$. Then, the Thue sequence of $K_{m+1}$ is given by $(m+1,m,m,(m-1),(m-1),(m-1),\ldots ,\underbrace{(m-\ell +1),(m-\ell +1)\ldots ,(m-\ell +1)}_{(\ell+1)-entries},\ldots,\underbrace{3,3,3,\ldots ,3}_{(m-1)-entries},2,\underbrace{2,2,2\ldots ,2}_{(m-1)-entries},1)$. This sequence can be written as $(q_i:(i+1)(n-i), 0\le i \le n-2)\cup(1)$. As $\epsilon(K_{m+1})=\frac{m(m+1)}{2}$, we have
$\tau(K_{m+1})=\frac{2}{m(m+1)}\sum\limits_{i=0}^{m-1}(i+1)((m+1)-i)+1)$. 

\ni Hence, the result follows by induction.
\end{proof}

\section{Thue Colouring of Linear Jaco Graphs}


As a generalisation of certain types of Jaco graphs, the notion of a linear Jaco graph has been introduced in \cite{KSS1} as follows.

\begin{defn}{\rm 
\cite{KSS1} Let $f(x) = mx + c; x \in \N,~ m,c\in \N_0$. The family of infinite linear Jaco graphs, denoted by $\{J_\infty(f(x)):f(x) = mx + c; x \in \N$ and $m,c \in \N_0\}$, is defined by $V(J_\infty(f(x))) = \{v_i: i \in \N\},~ A(J_\infty(f(x))) \subseteq \{(v_i, v_j): i, j \in \N, i< j\}$ and $(v_i,v_ j) \in A(J_\infty(f(x)))$ if and only if $(f(i) + i) - d^-(v_i) \ge j$.
}\end{defn}

\begin{defn}{\rm 
\cite{KSS1} The family of finite linear Jaco graphs denoted by $\{J_n(f(x)):f(x) = mx + c; x \in \N$ and $m,c\in \N_0\}$ is defined by $V(J_n(f(x))) = \{v_i: i \in \N, i \le n \},~ A(J_n(f(x))) \subseteq \{(v_i, v_j): i,j \in \N, i< j \le n\}$ and $(v_i,v_ j) \in A(J_n(f(x)))$ if and only if $(f(i)+i)-d^-(v_i)\ge j$.
}\end{defn} 


A vertex which attains the degree $\Delta(J_n(f(x)))$ is called a {\em Jaconian vertex} and the set of Jaconian vertices of the Jaco graph $J_n(f(x))$ is called the {\em Jaconian set} of  $J_n(f(x))$ and is denoted by $\mathbb{J}(J_n(x))$ or simply by $\mathbb{J}_n(x)$ for brevity, if the context is clear. The lowest numbered (subscripted) Jaconian vertex is called the \textit{prime Jaconian vertex} of a Jaco Graph.

If $v_i$ is the prime Jaconian vertex of a Jaco Graph $J_n (1)$, the complete subgraph on
vertices $v_{i+1},v_{i+2},\ldots, v_ n$ is called the \textit{Hope subgraph} or \textit{Hope graph} of a Jaco Graph and is denoted by $\mathbb{H}(J_n(x))$ or $\mathbb{H}_n(x)$ for brevity.

We refer to the corresponding underlying graph of $J_n(f(x))$, denoted $J^{\ast}_n(f(x))$, as a Jaco graph as well. We recall a result from \cite{KSS1}.

\begin{lem}\label{Lem-3.1}
{\rm \cite{KSS1}} For $m=0$ and $c\ge 0$, two special classes of disconnected linear Jaco graphs exist. For $c=0$ the Jaco graph $J_n(0)$  is a null graph (\textit{edgeless graph}) on $n$ vertices. For $c> 0$, the Jaco graph $J_n(c)=\bigcup\limits_{\lfloor\frac{n}{c+1}\rfloor-copies}K_{c+1} \bigcup K_{n-(c+1)\cdot \lfloor\frac{n}{c+1}\rfloor}$.
\end{lem}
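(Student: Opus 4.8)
The plan is to unwind the defining arc condition $(f(i)+i)-d^-(v_i)\ge j$ directly, using the fact that the construction is \emph{iterative}: every arc into $v_i$ emanates from a vertex $v_\ell$ with $\ell<i$, so processing $v_1,v_2,\ldots$ in increasing order of index fixes $d^-(v_i)$ completely before the out-arcs of $v_i$ are decided, and the graph is therefore well defined. The statement splits along $c=0$ and $c>0$. For $c=0$ we have $f\equiv 0$, so the arc condition reads $i-d^-(v_i)\ge j$; since an arc $(v_i,v_j)$ requires $j>i$ while $d^-(v_i)\ge 0$, no such arc can exist and $J_n(0)$ is the null graph on $n$ vertices, which is the first assertion.

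For $c>0$ I would first prove by induction on $k$ that $d^-(v_k)=k-1$ for every $k$ with $1\le k\le\min\{n,\,c+1\}$, and hence that the out-neighbourhood of $v_k$ is exactly $\{v_j:\,k<j\le\min\{n,\,c+1\}\}$ (empty when $k=\min\{n,\,c+1\}$). The inductive step is the crux: granting $d^-(v_\ell)=\ell-1$ for all $\ell\le k$ gives $(f(\ell)+\ell)-d^-(v_\ell)=\ell+c-(\ell-1)=c+1$ for each such $\ell$, so $v_\ell$ is joined to precisely $v_{\ell+1},\ldots,v_{\min\{n,c+1\}}$; the only arcs that can enter $v_{k+1}$ are those from $v_1,\ldots,v_k$, and each of these does reach $v_{k+1}$, whence $d^-(v_{k+1})=k$. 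Two consequences drop out. If $n\le c+1$, then every two vertices $v_i,v_j$ with $i<j\le n$ are adjacent, so $J_n(c)=K_n$. If $n>c+1$, then $v_1,\ldots,v_{c+1}$ induce a $K_{c+1}$ from which \emph{no} arc escapes (each such vertex sends arcs only to indices $\le c+1$), and in particular $d^-(v_{c+2})=0$.

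When $n>c+1$ the subgraph induced on $\{v_{c+2},\ldots,v_n\}$ thus receives no arc from the first block; and since $v_{c+2}$ has in-degree $0$ and $f$ is constant, the iterative construction restricted to $v_{c+2},\ldots,v_n$ reproduces that of $J_{n-(c+1)}(c)$ under the index shift $v_i\mapsto v_{i-(c+1)}$. Hence $J_n(c)=K_{c+1}\cup J_{n-(c+1)}(c)$, and iterating this (the base cases $n\le c+1$ being settled above) yields
\[
J_n(c)=\Bigl(\,\bigcup_{\lfloor n/(c+1)\rfloor\ \text{copies}}K_{c+1}\Bigr)\ \cup\ K_{\,n-(c+1)\lfloor n/(c+1)\rfloor},
\]
which is the claimed decomposition, disconnected as soon as $n\ge c+2$. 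The one genuinely delicate point is the simultaneous bookkeeping in the induction on $k$ --- one has to confirm at the same time that each earlier vertex $v_\ell$ really does reach $v_{k+1}$ and that nothing outside $\{v_1,\ldots,v_{c+1}\}$ is touched --- but once this first block of size $c+1$ is pinned down, the periodicity of the configuration makes all later blocks automatic, and that is where I expect to spend the care.
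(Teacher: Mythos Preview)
Your argument is correct. The paper itself does not prove Lemma~\ref{Lem-3.1}; it is quoted verbatim from \cite{KSS1} as a known structural fact about constant linear Jaco graphs, so there is no in-paper proof to compare against. What you have written is a clean self-contained verification directly from the defining arc condition $(f(i)+i)-d^-(v_i)\ge j$: the $c=0$ case is immediate, and for $c>0$ your induction establishing $d^-(v_k)=k-1$ on the first $c+1$ vertices, together with the observation that no arc leaves that block, gives the recursion $J_n(c)=K_{c+1}\cup J_{n-(c+1)}(c)$ from which the claimed decomposition follows. The index-shift step (checking that the arc condition on $\{v_{c+2},\ldots,v_n\}$ reproduces that of $J_{n-(c+1)}(c)$) is the only place one must be slightly careful, and you have handled it correctly since $f$ is constant and the shift cancels in $(c+i)-d^-(v_i)\ge j$.
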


\begin{prop}\label{Prop-3.2}
Let $m = 0$ and $c\ge 0$, then $\pi(J^{\ast}_{n\ge 1}(c)) \in \{1,2,3,\ldots,c+1\}$.
\end{prop}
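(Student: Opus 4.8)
The plan is to deduce the statement directly from the structural description of these graphs given in Lemma~\ref{Lem-3.1}, combined with two facts already available in the paper: $\pi(K_k)=k$ for every $k\ge 1$, and $\pi(G)=\max\{\pi(G_i):1\le i\le \ell\}$ whenever $G=\bigcup_{i=1}^{\ell}G_i$ is a disjoint union of connected graphs.

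First I would settle the case $c=0$. By Lemma~\ref{Lem-3.1} the graph $J_n(0)$ is edgeless, so colouring every vertex with a single colour is a (trivially) Thue colouring, whence $\pi(J^{\ast}_{n}(0))=1$, which lies in $\{1,\dots,c+1\}=\{1\}$.

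Next, suppose $c\ge 1$. By Lemma~\ref{Lem-3.1}, $J^{\ast}_{n}(c)$ is the disjoint union of $\lfloor n/(c+1)\rfloor$ copies of $K_{c+1}$ together with a single copy of $K_r$, where $r=n-(c+1)\lfloor n/(c+1)\rfloor$. The key observation is that $0\le r\le c$, since $r$ is precisely the remainder of $n$ upon division by $c+1$; hence every connected component of $J^{\ast}_{n}(c)$ is a complete graph on at most $c+1$ vertices, and $\pi(K_r)\le c<c+1$ whenever $r\ge 1$. Applying the disjoint-union formula then gives $\pi(J^{\ast}_{n}(c))=\max\{c+1,\pi(K_r)\}=c+1$ when $n\ge c+1$ (so that at least one copy of $K_{c+1}$ occurs), and $\pi(J^{\ast}_{n}(c))=\pi(K_n)=n\le c$ when $1\le n\le c$ (so that $\lfloor n/(c+1)\rfloor=0$ and the whole graph is $K_n$). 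In either case the value lies in $\{1,2,\dots,c+1\}$, which is the assertion.

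The only point requiring any care is the bookkeeping of the leftover clique $K_r$: one must note that $r<c+1$ by the definition of the floor function, so that this component can never raise the Thue number above $c+1$, and one should treat separately the degenerate situations $r=0$ (no leftover component) and $n\le c$ (no copy of $K_{c+1}$ at all). Apart from this, the argument is a direct appeal to results already established in the paper, so I do not anticipate a genuine obstacle.
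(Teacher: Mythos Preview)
Your proposal is correct and follows essentially the same route as the paper: both arguments invoke Lemma~\ref{Lem-3.1} to identify $J^{\ast}_n(c)$ as a disjoint union of complete graphs, then apply $\pi(K_k)=k$ together with the disjoint-union formula, splitting into the cases $c=0$, $n\ge c+1$, and $1\le n\le c$. Your version is somewhat more explicit about the leftover component $K_r$ and the degenerate cases, but the underlying argument is identical.
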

\begin{proof}
From Lemma \ref{Lem-3.1}, it follows that if $c=0$ then $J^{\ast}_{n\ge 1}(0)$ is an edgeless graph. Hence, $\pi(J^{\ast}_{n\ge 1}(0))=1$. If $c> 0$ then $J^{\ast}_{n\ge 1}(c)$ is the union of complete graphs of which a largest component is $K_{c+1}$ if and only if $n \ge c+1$. Hence, $\pi(J^{\ast}_{n\ge c+1}(c))=c+1$. For $t\le c$ we have a complete graph $K_t$ thus, $\pi(J^{\ast}_{1\le t\le c}(c))=t$. This settles the result that $\pi(J^{\ast}_{n\ge 1}(c)) \in \{1,2,3,\ldots,c+1\}$.
\end{proof}
 
We now present a corresponding result for the case $m=1,c=0$. First, recall a lemma from \cite{KSS1}.

\begin{lem}\label{Lem-3.3}
{\rm \cite{KSS1}} For a Jaco graph $J^{\ast}_{n+1}(x)$ we have, $\Delta(J^{\ast}_{n+1}(x)) = \Delta(J^{\ast}_n(x))$ or $\Delta(J^{\ast}_n(x)) + 1$.
\end{lem}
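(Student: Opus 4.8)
The plan is to realise $J^{\ast}_n(x)$ as an induced subgraph of $J^{\ast}_{n+1}(x)$ and then to control how much a single new vertex can raise the maximum degree. The first step is to check that adding $v_{n+1}$ disturbs nothing already built: I would prove by strong induction on the index $i$ that $d^-_{J_{n+1}(x)}(v_i)=d^-_{J_n(x)}(v_i)$ for every $i\le n$. The case $i=1$ is trivial since $v_1$ has no in-neighbour, and for the inductive step an arc $(v_k,v_i)$ with $k<i$ is present in $J_{n+1}(x)$ exactly when $f(k)+k-d^-(v_k)\ge i$, where by the induction hypothesis the value $d^-(v_k)$ is the same whether computed in $J_n(x)$ or in $J_{n+1}(x)$; hence the in-neighbourhood of $v_i$, and so $d^-(v_i)$, is unchanged. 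Since adjacency of $v_i$ and $v_j$ with $i<j\le n$ depends only on $f(i)+i-d^-(v_i)$ and on $j$, it follows that the edges of $J^{\ast}_n(x)$ are precisely the edges of $J^{\ast}_{n+1}(x)$ with both ends in $\{v_1,\dots,v_n\}$. Thus $J^{\ast}_{n+1}(x)$ is $J^{\ast}_n(x)$ with the vertex $v_{n+1}$ and some edges at $v_{n+1}$ adjoined; in particular no old degree decreases, so $\Delta(J^{\ast}_{n+1}(x))\ge\Delta(J^{\ast}_n(x))$, and each old degree increases by at most $1$.

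It remains to bound the degree of the new vertex. Since $v_{n+1}$ has the largest index, $\deg_{J^{\ast}_{n+1}(x)}(v_{n+1})=d^-(v_{n+1})$ is the number of indices $i\le n$ with $f(i)+i-d^-(v_i)\ge n+1$. The key observation is that this inequality implies $f(i)+i-d^-(v_i)\ge n$, which for $i\le n-1$ says exactly that $v_i$ is an in-neighbour of $v_n$; hence $N^-(v_{n+1})\setminus\{v_n\}\subseteq N^-(v_n)$ and therefore $d^-(v_{n+1})\le d^-(v_n)+1$. Using the first step, $d^-_{J_{n+1}(x)}(v_n)=d^-_{J_n(x)}(v_n)\le\deg_{J^{\ast}_n(x)}(v_n)\le\Delta(J^{\ast}_n(x))$, so $\deg_{J^{\ast}_{n+1}(x)}(v_{n+1})\le\Delta(J^{\ast}_n(x))+1$.

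Putting the two steps together, every vertex of $J^{\ast}_{n+1}(x)$ has degree at most $\Delta(J^{\ast}_n(x))+1$, so $\Delta(J^{\ast}_n(x))\le\Delta(J^{\ast}_{n+1}(x))\le\Delta(J^{\ast}_n(x))+1$, which is the assertion; small instances show that each of the two possibilities genuinely occurs. I expect the second step, controlling the degree of the freshly added vertex, to be the only real obstacle: it rests on the nesting of consecutive in-neighbourhoods, which itself relies on the monotone way the threshold $f(i)+i-d^-(v_i)$ enters the adjacency rule together with the fact, from the first step, that introducing $v_{n+1}$ leaves the in-degrees of the earlier vertices untouched.
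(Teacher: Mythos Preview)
The paper does not actually prove this lemma; it is quoted without proof from \cite{KSS1}. Your argument is correct and self-contained: the strong induction showing that $d^-(v_i)$ is unchanged for $i\le n$ is exactly the right way to make precise that $J^{\ast}_n(x)$ sits inside $J^{\ast}_{n+1}(x)$ as an induced subgraph, and the nesting $N^-(v_{n+1})\setminus\{v_n\}\subseteq N^-(v_n)$ cleanly bounds the degree of the new vertex. Since the paper offers nothing to compare against, there is no methodological divergence to discuss; your proof would serve as a legitimate replacement for the bare citation.
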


\begin{thm}\label{Thm-3.4}[Bokang's theorem\footnote{The first author dedicates this theorem to Miss Bokang Lerato Tshabalala who is expected to grow up with a deep fond of mathematics.}]
For the Jaco graph $J^{\ast}_n(x), ~n\ge 1$ we have $\Delta(J^{\ast}_n(x)) \le \pi(J^{\ast}_n(x)) \le \Delta(J^{\ast}_n(x)) + 2$.
\end{thm}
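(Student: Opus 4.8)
The plan is to prove the two inequalities separately. First dispose of the case $m=0$: by Proposition \ref{Prop-3.2} and Lemma \ref{Lem-3.1}, $J^{\ast}_n(c)$ is a disjoint union of complete graphs whose largest component is $K_{c+1}$ (for $n\ge c+1$) or $K_n$ (for $n<c+1$), so $\pi(J^{\ast}_n(c))=\Delta(J^{\ast}_n(c))+1$ and both bounds hold. For $m\ge1$ I would use the structure of linear Jaco graphs recorded in \cite{KSS1}: the out-neighbourhood of $v_i$ is the block $N^+(v_i)=\{v_{i+1},\ldots,v_{r(i)}\}$ with $r(i):=i+d^+(v_i)$ the ``reach'' of $v_i$, the reach $r$ is strictly increasing in $i$, so the in-neighbourhood $N^-(v_i)=\{v_{i-d^-(v_i)},\ldots,v_{i-1}\}$ is also a block, and consequently $\{v_i\}\cup N^-(v_i)$ and $\{v_i\}\cup N^+(v_i)$ are each cliques. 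Fix a Jaconian vertex $v_k$, so $d^-(v_k)+d^+(v_k)=d(v_k)=\Delta:=\Delta(J^{\ast}_n(x))$.

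For the lower bound I would argue by contradiction from a Thue colouring $\varphi$ using at most $\Delta-1$ colours. This cannot be reduced to $\pi\ge\omega$, because $\omega(J^{\ast}_n(x))$ is in general strictly smaller than $\Delta$. Properness on the two cliques $\{v_k\}\cup N^+(v_k)$ and $\{v_k\}\cup N^-(v_k)$ gives $d^+(v_k)+1$ and $d^-(v_k)+1$ pairwise distinct colours that agree only in $\varphi(v_k)$; listing the $(d^+(v_k)+1)+d^-(v_k)=\Delta+1$ colour occurrences on $(\{v_k\}\cup N^+(v_k))\cup N^-(v_k)$ against the $\le\Delta-1$ colours in use forces at least two colours to be listed twice, and, as $\varphi(v_k)$ does not occur on $N^-(v_k)$, this means at least two colours $A,B$ occur on both $N^-(v_k)$ and $N^+(v_k)$. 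Let $v_{b_A},v_{b_B}\in N^-(v_k)$ and $v_{c_A},v_{c_B}\in N^+(v_k)$ carry these colours; then $v_{b_A}\sim v_{b_B}$, $v_{c_A}\sim v_{c_B}$ and, by properness, $v_{b_A}\not\sim v_{c_A}$, $v_{b_B}\not\sim v_{c_B}$. Using strict monotonicity of $r$ I would then show that some ordering of these four vertices, possibly together with $v_k$, is a path whose colour word is $ABAB$ or $BABA$, i.e.\ contains a square, contradicting the Thue property. The finitely many small $n$ for which the neighbourhood intervals degenerate are checked directly.

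For the upper bound I would induct on $n$, with small base cases and the $m=0$ case as above. Passing from $J^{\ast}_{n-1}(x)$ to $J^{\ast}_n(x)$ adjoins $v_n$ with $N(v_n)=\{v_a,\ldots,v_{n-1}\}$ a clique, and by Lemma \ref{Lem-3.3}, $\Delta(J^{\ast}_n(x))\in\{\Delta(J^{\ast}_{n-1}(x)),\Delta(J^{\ast}_{n-1}(x))+1\}$. If $\Delta$ strictly increases, a Thue $(\Delta(J^{\ast}_{n-1}(x))+2)$-colouring of $J^{\ast}_{n-1}(x)$ leaves one of the $\Delta(J^{\ast}_n(x))+2$ available colours unused, and giving $v_n$ this fresh colour is automatically safe: a colour appearing only at $v_n$ cannot occur twice in a path, so no path acquires a square through $v_n$, while paths avoiding $v_n$ are untouched. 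If $\Delta$ does not increase, the clique $N(v_n)$ uses $d(v_n)\le\Delta$ of the $\Delta+2$ colours, so at least two colours remain admissible for $v_n$ by properness; because $N(v_n)$ is a clique, every path through or ending at $v_n$ enters this clique in one step, and I would use this to bound the colours that would complete a square and to select an admissible one, recolouring at worst the terminal clique $\{v_a,\ldots,v_n\}$ within the $\Delta+2$ colours already in hand.

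The main obstacle is the ``$\Delta$ unchanged'' branch of the upper bound: no spare colour is then available, the admissible set for $v_n$ may be just two colours, and one must rule out both of them completing a square on some path through $v_n$; the interval and monotone-reach structure of Jaco neighbourhoods is what makes this work, with localized recolouring of the terminal clique as the fallback. A secondary subtlety is that the square $ABAB$ in the lower bound must occur inside a path of \emph{even} length; prolonging the four-vertex path by one more edge into the clique $\{v_k\}\cup N^+(v_k)$, which reach monotonicity always permits, handles this.
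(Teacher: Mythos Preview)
Your proposal diverges from the paper on both inequalities, and in each direction there is a genuine gap.

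\textbf{Scope.} The theorem concerns only $J^{\ast}_n(x)$ with $f(x)=x$ (i.e.\ $m=1$, $c=0$), as stated just before it in the paper; your opening discussion of $m=0$ is not part of this result.

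\textbf{Lower bound.} The paper disposes of $\pi\ge\Delta$ in one line: it asserts that the Hope subgraph of $J^{\ast}_n(x)$ is a copy of $K_{\Delta(J^{\ast}_n(x))}$, so $\pi\ge\omega\ge\Delta$. You explicitly reject the clique route, claiming $\omega<\Delta$ in general, and instead try to force a square $ABAB$ among in- and out-neighbours of a Jaconian vertex. Regardless of which claim about $\omega$ is correct, your argument does not close: from $v_{b_A}\sim v_{b_B}$, $v_{c_A}\sim v_{c_B}$, $v_{b_A}\not\sim v_{c_A}$, $v_{b_B}\not\sim v_{c_B}$ you still need a cross-adjacency (either $v_{b_B}\sim v_{c_A}$ or $v_{b_A}\sim v_{c_B}$) to form a $4$-vertex path, and monotonicity of reach does not guarantee one. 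Your ``secondary subtlety'' about parity is also off: in the paper's definition the forbidden paths are those on $2m$ vertices, so a $4$-vertex path with colour word $ABAB$ is already a violation and needs no prolongation.

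\textbf{Upper bound.} Both you and the paper induct on $n$ using Lemma~\ref{Lem-3.3}, but the paper in fact proves the sharper bound $\pi\le\Delta+1$ (see the last line of its proof). It splits into four cases according to whether $\pi(J^{\ast}_k(x))\in\{\Delta_k,\Delta_k+1\}$ and whether $\Delta_{k+1}\in\{\Delta_k,\Delta_k+1\}$. In the only delicate case (Case~4: $\pi=\Delta_k+1$ and $\Delta_{k+1}=\Delta_k$) it does \emph{not} recolour a terminal clique as you propose; instead it assigns $v_{k+1}$ the specific colour $c_1=\varphi(v_1)$ and argues that any square on a path through $v_{k+1}$ would, after deleting the two occurrences of $c_1$, leave a square already present in $J^{\ast}_k(x)$, contradicting the induction hypothesis. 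In your proposal this same case is labelled ``the main obstacle'' and left unresolved, so the upper bound is not established by your plan.
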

\begin{proof}
For $n = 1,2,3,4$ the corresponding Jaco graphs are paths so the result holds. Since the Hope graph of $J^{\ast}_n(x),~ \forall n\in \N$ is a complete graph $K_{\Delta(J^{\ast}_n(x))}$, the lower bound is obvious. For $J^{\ast}_5(x)$, we have the path $v_1,v_2,v_3,v_4,v_5$ with the added edge $v_3v_5$. Clearly, $\pi(J^{\ast}_5(x))=4=\Delta(J^{\ast}_5(x))+1$. Assume that it holds for all Jaco graphs $J^{\ast}_n(x), ~5 \le n \le k$ and let the vertices of $J^{\ast}_k(x)$ allow the Thue colouring $\varphi:V(J^{\ast}_k(x)) \mapsto \{c_1,c_2,c_3, \ldots, c_\ell\},~ \ell \le \Delta(J^{\ast}_k(x)) + 1$ with $\varphi:v_1 \mapsto c_1$. Now, consider $J^{\ast}_{k+1}(x)$.  Since $\Delta(J^{\ast}_{k+1}(x)) = \Delta(J^{\ast}_k(x))$ or $\Delta(J^{\ast}_k(x)) + 1$ and $\pi(J^{\ast}_k(x)) = \Delta(J^{\ast}_k(x))$ or $\Delta(J^{\ast}_k(x)) + 1$, we consider the following four cases. Let $\varphi = \varphi'$ for $v_i, 1 \le i \le k$ and  $\varphi':v_{k+1} \mapsto c_{\ell + 1}$.

\textit{Case 1:} Let $\ell = \pi(J^{\ast}_k(x)) = \Delta(J^{\ast}_K(x))$; $\Delta(J^{\ast}_{k+1}(x)) = \Delta(J^{\ast}_k(x))$. Since $\varphi':v_{k+1} \mapsto c_{\ell+1}$ might not allow a minimum Thue colouring, $\pi(J^{\ast}_{k+1}(x)) \le \Delta(J^{\ast}_{k+1}(x)) + 1$.

\textit{Case 2:}  Let $\ell = \pi(J^{\ast}_k(x)) = \Delta(J^{\ast}_K(x))$; $\Delta(J^{\ast}_{k+1}(x)) = \Delta(J^{\ast}_k(x)) + 1$. Since $\Delta(J^{\ast}_{k+1}(x)) = \Delta(J^{\ast}_k(x)) + 1$, the colouring $\varphi':v_{k+1} \mapsto c_{\ell+1}$ allows a minimum Thue colouring so $\pi(J^{\ast}_{k+1}(x)) = \Delta(J^{\ast}_{k+1}(x)) + 1$.

\textit{Case 3:}  Let $\ell =\pi(J^{\ast}_k(x)) = \Delta(J^{\ast}_k(x)) + 1$; $\Delta(J^{\ast}_{k+1}(x)) = \Delta(J^{\ast}_k(x)) + 1$.  Since $\varphi':v_{k+1} \mapsto c_{\ell+1}$ might not allow a minimum Thue colouring, $\pi(J^{\ast}_{k+1}(x)) \le \Delta(J^{\ast}_{k+1}(x)) + 1$.

\textit{Case 4:} Let $\ell =\pi(J^{\ast}_k(x)) = \Delta(J^{\ast}_k(x)) + 1$; $\Delta(J^{\ast}_{k+1}(x)) = \Delta(J^{\ast}_k(x))$.  The smallest Jaco graph for which we have this case is $J^{\ast}_5(x)$ and $J^{\ast}_6(x)$. For $\varphi':v_1 \mapsto c_1, \varphi':v_2 \mapsto c_2,\varphi':v_3 \mapsto c_1,\varphi':v_4 \mapsto c_3,\varphi':v_5 \mapsto c_4,\varphi':v_6 \mapsto c_1$, a minimum Thue colouring is allowed and $\pi(J^{\ast}_6(x)) \le \Delta(J^{\ast}_6(x)) + 1$. Extend $\varphi'$ to $J^{\ast}_{k+1}(x)$ with $\varphi':v_{k+1} \mapsto c_1$. Without loss of generality, assume that there exists a path beginning at vertex $v_{k+1}$ which represents at least a two block repetitive sequence say, $(c_1,c_j,c_k,c_m,\ldots ,c_t,c_1,c_j,c_k,c_m, \ldots ,c_t)$, then $(c_j,c_k,c_m,\ldots ,c_t,c_j,c_k,c_m, \ldots ,c_t)$ corresponds to a two block repetitive sequence in $J^{\ast}_k(x)$ which is a contradiction. Equally so, if a two block repetitive sequence $(c_j,c_k,c_m,\ldots ,c_t,c_1 ,c_j,c_k,c_m, \ldots,c_t,c_1)$, then $(c_j,c_k,c_m,\ldots,c_t, c_j,c_k,c_m,\ldots ,c_t)$ corresponds to a two block repetitive sequence in $J^{\ast}_k(x)$, which is a contradiction. Hence, by induction, it follows that no such repetitive sequence of any number of blocks exists. 

\ni Therefore, $\Delta(J^{\ast}_n(x)) \le \pi(J^{\ast}_n(x)) \le \Delta(J^{\ast}_n(x)) + 1$. 
\end{proof}

We strongly believed that a generalisation of Theorem \ref{Thm-3.4} is possible as provided in the following conjecture.

\begin{conj}\label{Conj-3.5}
For the Jaco graph $J^{\ast}_n(f(x)),~f(x) = mx +c, x \in \N,~m\ge 2,~ c\ge 1$, we have $\pi(J^{\ast}_n(f(x))) \le \Delta(J^{\ast}_n(f(x))) + \delta(J^{\ast}_n(f(x)))$.
\end{conj}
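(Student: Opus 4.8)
\ni\textbf{A possible approach to Conjecture~\ref{Conj-3.5}.}
The natural line of attack is to run the inductive scheme of Theorem~\ref{Thm-3.4} (Bokang's theorem) for an arbitrary linear $f(x)=mx+c$ with $m\ge 2,~c\ge 1$, using the larger slack $\delta(J^{\ast}_n(f(x)))$ in the target bound to absorb the richer structure of $J^{\ast}_n(f(x))$ when $m\ge 2$. First I would isolate the structural facts that make Bokang's induction work and verify their analogues here: (i) the vertices $v_1,\dots,v_k$ induce exactly $J^{\ast}_k(f(x))$ inside $J^{\ast}_{k+1}(f(x))$, because whether $(v_i,v_j)\in A$ with $i<j\le k$ is decided entirely within $\{v_1,\dots,v_{j-1}\}$; (ii) the analogue of Lemma~\ref{Lem-3.3}, namely $\Delta(J^{\ast}_{k+1}(f(x)))\le\Delta(J^{\ast}_k(f(x)))+1$, since adding $v_{k+1}$ raises every old degree by at most one and the in-neighbours of $v_{k+1}$ are contained in the in-neighbours of $v_k$ together with $v_k$ itself; and (iii) the stabilisation $\delta(J^{\ast}_n(f(x)))=f(1)=m+c$ for all sufficiently large $n$, because $v_1$ has degree exactly $m+c$ once $n\ge m+c+1$ while the terminal vertices of $J^{\ast}_n(f(x))$ span a clique whose order grows with $n$, so no vertex eventually has smaller degree.

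\ni With these in place the argument is an induction on $n$ (restricted to $n\ge 2$, since $J^{\ast}_1(f(x))=K_1$ already violates the bound). For $2\le n\le m+c+1$ the graph is the complete graph $K_n$ -- $v_1$ reaches $v_n$ and the degree condition then forces all other edges -- so $\pi=n\le 2(n-1)=\Delta+\delta$, settling the base. For the inductive step, assume $\varphi$ is a Thue colouring of $J^{\ast}_k(f(x))$ using at most $\Delta(J^{\ast}_k(f(x)))+\delta(J^{\ast}_k(f(x)))$ colours with $\varphi(v_1)=c_1$, and extend it to $v_{k+1}$. As $v_{k+1}$ is adjacent only to a bounded terminal clique, $N(v_{k+1})$ has at most $\Delta(J^{\ast}_{k+1}(f(x)))$ members, so at least $\delta(J^{\ast}_{k+1}(f(x)))\ge 3$ of the available colours are absent from $N(v_{k+1})$. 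Splitting into cases according to whether $\Delta$ increases, I would set $\varphi'(v_{k+1})$ to such a missing colour, spending a genuinely new colour when the budget grows and otherwise recycling $c_1=\varphi(v_1)$ exactly as in Case~4 of Bokang's proof; the spare headroom $\delta=m+c\ge 3$ should guarantee that such a choice exists even when $v_{k+1}$ itself attains the new maximum degree, and that the recycled colour can be picked ``far'' from the tail.

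\ni It then remains to verify that $\varphi'$ is non-repetitive. An even path avoiding $v_{k+1}$ lies in $J^{\ast}_k(f(x))$ and is handled by the induction hypothesis. If such a path has $v_{k+1}$ as an endpoint, deleting $v_{k+1}$ leaves a path in $J^{\ast}_k(f(x))$, and one argues, as in Bokang's theorem, that a square colour sequence on the longer path would force a square colour sequence in $J^{\ast}_k(f(x))$, which is impossible. If $v_{k+1}$ is an internal vertex, its two path-neighbours lie in the terminal clique and are therefore adjacent, so the path short-cuts to one in $J^{\ast}_k(f(x))$, and the bounded number of ways a square could survive this short-cut are eliminated using properness together with the unused colours. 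A cleaner but slightly wasteful variant worth running in parallel: give $v_{k+1}$ a colour not occurring anywhere in $J^{\ast}_k(f(x))$ whenever the budget permits, since a genuinely fresh colour forbids \emph{every} square through $v_{k+1}$ outright; the whole difficulty then concentrates in bounding how often one is \emph{forced} to recycle and in showing the recycled colour (say $c_1$) can always be chosen safely.

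\ni The main obstacle is precisely this structural input. The excerpt establishes ``the Hope graph is complete'' and the single-step bound on $\Delta$ only for $f(x)=x$; for $m\ge 2$ one needs the corresponding description of the terminal clique of $J^{\ast}_n(f(x))$ and of which earlier vertices a newly inserted vertex attaches to, and without that local picture one cannot show the recycled colour is simultaneously available off $N(v_{k+1})$ and square-free. Securing that description -- essentially an explicit understanding of the degree sequence and of the ``Hope subgraph'' of linear Jaco graphs for general $m$ and $c$ -- is the crux, which is why the statement is offered as a conjecture rather than proved here.
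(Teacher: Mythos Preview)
The paper contains no proof of Conjecture~\ref{Conj-3.5}; it is stated immediately after Theorem~\ref{Thm-3.4} as an open conjecture, with no argument or sketch offered. There is therefore nothing in the paper to compare your proposal against.

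Your submission is likewise not a proof but an outline of a strategy, and you are candid about this: you correctly isolate the structural inputs that would be needed (an analogue of Lemma~\ref{Lem-3.3} for general $m,c$, a description of the Hope subgraph, and the stabilisation of $\delta$), and you say explicitly that the absence of these is why the statement is conjectural. That diagnosis matches the paper's own stance. You also catch that the conjecture as literally stated fails at $n=1$, which the paper does not remark on.

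Two places in the sketch would need real work before it could become a proof. First, the ``internal vertex'' case: short-cutting past $v_{k+1}$ through the terminal clique shortens the path by exactly one vertex, so an even path becomes odd, and a square on the original path does not obviously descend to a square on the shortened one; the phrase ``the bounded number of ways a square could survive this short-cut are eliminated'' is a placeholder, not an argument. Second, even in the endpoint case, Bokang's Case~4 trick of stripping $c_1$ from both ends relies on $v_1$ being a leaf-like vertex with a unique neighbour; once $\delta = m+c \ge 3$, the colour $c_1=\varphi(v_1)$ sits on a vertex of degree at least~$3$, so the ``delete the first and last symbol'' reduction no longer cleanly produces a path inside $J^{\ast}_k(f(x))$. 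These are precisely the points where the larger slack $\delta$ would have to be exploited, and your outline does not yet say how.
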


\section{On Thue Distance, Thue Diameter, Thue Radius and Thue Reach}

A Thue colouring of a graph $G$ has the property that it is a proper vertex colouring for which any path in $G$ is a non-repetitive sequence of colours. Therefore, the property of minimality \textit{per se} is not embedded in the concept of a Thue colouring. Hence, a rainbow colouring is a Thue colouring, but not a minimum Thue colouring. We call a minimum Thue colouring of $G$ a $\mathfrak{T}$-colouring of $G$.

Hence, if $G$ and $\pi(G)$ are known, and a finite number $t$ colours may be chosen from then, $\binom{t}{\pi(G)}$ sets of colours are available to choose from. Call these sets, $\mathfrak{T}$-sets. For any $\mathfrak{T}$-set different mappings $\varphi_1, \varphi_2, \ldots , \varphi_k$ exist, all of which will ensure a minimum Thue colouring of $G$. So for a chosen $\mathfrak{T}$-set it is true that up to equivalence the many other $\mathfrak{T}$-sets become irrelevant. Consider a graph $G$ of order $n$ and a corresponding $\mathfrak{T}$-colouring.

\begin{defn}{\rm 
For a graph $G$ with $\pi(G) < n$ and a colour $c_i$, the \textit{Thue colour distance}, denoted by $\pi_d(c_i)$, is defined to be $\pi_d(c_i) = \min\{d_G(u,v): u,v \in V(G), \varphi(u) = \varphi(v) = c_i\}$.
}\end{defn}

\begin{defn}{\rm 
For a graph $G$ with $\pi(G) < n$, the \textit{Thue radius} of a graph $G$, denoted by $\pi_r(G)$, is defined to be $\pi_r(G) = \min\{\pi_d(c_i): \forall \varphi,~ \forall \pi_d(c_i)>0 \}$.
}\end{defn}

\begin{defn}{\rm 
For a graph $G$ with $\pi(G) < n$ and a colour $c_i$, the \textit{Thue colour diameter}, denoted by $\pi_D(c_i)$, and is defined as $\pi_D(c_i)=\max\{d_G(u,v): u,v \in V(G),\varphi(u)=\varphi(v)= c_i\}$.
}\end{defn}

\begin{defn}{\rm 
For a graph $G$ with $\pi(G) < n$, the \textit{Thue reach} of a graph $G$, denoted by $\pi_R(G)$,  is defined to be  $\pi_R(G) = \max\{\pi_D(c_i)\}$, for all $\varphi$ and for all $\pi_D(c_i)>0 \}$.
}\end{defn}

In view of the property of inherent adjacency, we can define the default value $\pi_d(c_i)=\pi_D(c_i)= 0$ for a rainbow colouring. Obviously, $\pi_d(c_i)\ne 1$ and hence $\pi_D(c_i) \ne 1$.

\begin{ill}{\rm 
To ensure a $\mathfrak{T}$-colouring of $P_4$, let the colours to be $\mathcal{C} = \{c_1,c_2,c_3\}$. There are many possible colourings ($\varphi$'s). Six correspond to $\pi_d(c_i) > 0$ for any $i \in \{1,2,3\}$.

\begin{enumerate}\itemsep0mm
\item[(i)] $\varphi(v_1) = c_1,\varphi(v_2) = c_2, \varphi(v_3) = c_1, \varphi(v_4) = c_3$.
\item[(ii)] $\varphi(v_1) = c_1,\varphi(v_2) = c_3, \varphi(v_3) = c_1, \varphi(v_4) = c_2$.
\item[(iii)] $\varphi(v_1) = c_2,\varphi(v_2) = c_1, \varphi(v_3) = c_2, \varphi(v_4) = c_3$.
\item[(iv)] $\varphi(v_1) = c_2,\varphi(v_2) = c_3, \varphi(v_3) = c_2, \varphi(v_4) = c_1$.
\item[(v)] $\varphi(v_1) = c_3,\varphi(v_2) = c_1, \varphi(v_3) = c_3, \varphi(v_4) = c_2$.
\item[(vi)] $\varphi(v_1) = c_3,\varphi(v_2) = c_2, \varphi(v_3) = c_3, \varphi(v_4) = c_1$.
\end{enumerate}

For (i), $\pi_d(c_1) = 2,~ \pi_d(c_2) =0,~ \pi_d(c_3) = 0$. Hence $\pi_r(P_4) = 2$. Similarly, for (ii), (iii), (iv), (v), (vi), $\pi_r(G)$ is constant regardless of $\varphi$. 

Similarly, to ensure the same $\mathfrak{T}$-colouring of $P_4$ determines $\pi_R(P_4)$, consider the following.

\begin{enumerate}\itemsep0mm
\item[(i)] $\varphi(v_1) = c_1,\varphi(v_2) = c_2, \varphi(v_3) = c_3, \varphi(v_4) = c_1$.
\item[(ii)] $\varphi(v_1) = c_1,\varphi(v_2) = c_3, \varphi(v_3) = c_2, \varphi(v_4) = c_1$.
\item[(iii)] $\varphi(v_1) = c_2,\varphi(v_2) = c_1, \varphi(v_3) = c_3, \varphi(v_4) = c_2$.
\item[(iv)] $\varphi(v_1) = c_2,\varphi(v_2) = c_3, \varphi(v_3) = c_1, \varphi(v_4) = c_2$.
\item[(v)]  $\varphi(v_1) = c_3,\varphi(v_2) = c_1, \varphi(v_3) = c_2, \varphi(v_4) = c_3$.
\item[(vi)]  $\varphi(v_1) = c_3,\varphi(v_2) = c_2, \varphi(v_3) = c_1, \varphi(v_4) = c_3$.
\end{enumerate}

For (i), $\pi_D(c_1)= 3,~ \pi_D(c_2) = 0,~ \pi_D(c_3) = 0$, hence $\pi_R(P_4) = 3$. Similarly, for (ii), (iii), (iv), (v), (vi), $\pi_R(G)$ is constant regardless of $\varphi$.}	\qed
\end{ill}

These concepts can find application if different colours represent different technologies to be installed at the points of a network and there is a requirement that a set of a particular technology should be minimum distance or maximum distance apart. For example, certain communication technology is such that the efficiency deteriorates if units are to far apart and a maximum distance criteria is set. On the other hand, for certain nuclear technology a minimum distance criteria is set. The next lemma is perhaps obvious but useful for further results.

\begin{lem}\label{Lem-4.1}
A graph $G$ of order $n\ge 3$ is a non-complete graph if and only if a subset of three vertices say, $\{u,v,w\}$ exists such that the induced subgraph $\langle u,v,w\rangle \ne C_3$.
\end{lem}
\begin{proof}
Since a disconnected graph cannot be complete the result holds for disconnected graphs. Assume that $G$ is connected.

First, assume that there exist three vertices $v,u,w$ such that $\langle v,u,w\rangle \ne C_3$ then at least $vu \notin E(G)$ or $vw \notin E(G)$ or $uw \notin E(G)$ so $G$ is not complete.

Conversely, assume that $G$ is a non-complete graph. For any two vertices $v,w$ for which the edge $vw \notin E(G)$, consider the shortest path $P = vu_1u_2u_3\ldots u_kw$. If $w=u_2$, then $\langle v,u_1,u_2\rangle \ne C_3$ and the result follows. However if $u_2 \ne w$, then the edge $vu_2 \notin E(G)$ else the path $P' = vu_2u_3\ldots w$ is a shortest path which is a contradiction. Hence, the induced subgraph $\langle v, u_1,u_2\rangle \ne C_3$.
\end{proof}

In view of the above results, the existence of Thue colouring for graphs, that are not complete, is discussed in the following theorem.

\begin{thm}
For a graph $G \ne K_n,~ \pi_r(G) = 2$. In other words, for a graph $G \ne K_n$ there exists a $\mathfrak{T}$-colouring $\varphi$ such that, vertices $u,v$ exist such that $\varphi(u) = \varphi(v)$ and $d_G(u,v) = 2$.
\end{thm}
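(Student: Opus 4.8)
The plan is to prove the two inequalities $\pi_r(G)\ge 2$ and $\pi_r(G)\le 2$ separately. The first is immediate from properness: in any $\mathfrak{T}$-colouring two vertices of the same colour are non-adjacent, so $\pi_d(c_i)\ne 1$ for every colour $c_i$, whence every positive value $\pi_d(c_i)$ occurring in the definition of $\pi_r(G)$ is at least $2$. (That these quantities are defined at all, i.e. that $\pi(G)<n$, is seen by colouring a fixed non-adjacent pair $a,b$ with one common colour and the other $n-2$ vertices with distinct new colours: this colouring is proper and has exactly one monochromatic pair, hence admits no square — a square of block length $1$ would be an adjacent monochromatic pair, and one of block length $\ge 2$ would need two disjoint monochromatic pairs — so $\pi(G)\le n-1$.)

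For $\pi_r(G)\le 2$ it suffices to exhibit one $\mathfrak{T}$-colouring of $G$ in which some colour class contains two vertices at distance exactly $2$; by the previous paragraph the Thue colour distance of that class is then $2$, so $\pi_r(G)=2$. Since $G\ne K_n$ (and all graphs here are connected), Lemma \ref{Lem-4.1}, more precisely the case analysis in its proof, supplies an induced path $u\,x\,w$ in $G$; as $uw\notin E(G)$ we get $d_G(u,w)=2$. Now start from an arbitrary $\mathfrak{T}$-colouring $\psi$. If $\psi(u)=\psi(w)$ we are done. Otherwise $\psi(u),\psi(x),\psi(w)$ are pairwise distinct, and I would try to recolour $u$ by $\psi(w)$ (or symmetrically $w$ by $\psi(u)$). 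If this keeps the colouring proper, non-repetitive, and uses no extra colour, then $u$ and $w$ share a class at distance $2$ and we are done. If properness fails, some neighbour $u'$ of $u$ already has $\psi(u')=\psi(w)$, so $u',w$ are monochromatic with $2\le d_G(u',w)\le d_G(u',u)+d_G(u,w)=3$; if this distance is $2$ we are again done, and the surviving subcase ($d_G(u',w)=3$, and the symmetric one on the $w$ side) is to be disposed of by shifting colours along the short path $u'\,u\,x$ or by interchanging two colour classes, rechecking the same three conditions.

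The main obstacle is \emph{verifying that non-repetitiveness is preserved} under such a local recolouring: after changing the colour of a single vertex $v$, only an even-length path through $v$ can become a square, so one has to rule out the finitely many shapes of such a square, using that $\psi$ was already a Thue colouring and that $u,x,w$ induce a $P_3$ (in particular $uw\notin E(G)$). Making the ``distance $3$'' subcase watertight — where the naive recolouring of $u$ or $w$ is blocked and one must instead argue on a neighbouring vertex — is the genuinely delicate point, and it is exactly here that the hypothesis $G\ne K_n$, through the induced $P_3$ furnished by Lemma \ref{Lem-4.1}, is essential.
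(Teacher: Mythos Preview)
Your lower bound $\pi_r(G)\ge 2$ and the explicit verification that $\pi(G)<n$ are clean and in fact more carefully justified than in the paper. The overall architecture for the upper bound --- invoke Lemma~\ref{Lem-4.1} to locate an induced $P_3=uxw$, start from a $\mathfrak{T}$-colouring, and then recolour so that the endpoints share a colour --- is exactly the paper's strategy. Where you diverge is in the recolouring step itself: you attempt a direct local change at $u$ or $w$, while the paper instead passes to $G-v_2$ (deleting the middle vertex of the induced $P_3$) and argues that in the smaller graph one can, by ``colour index rotation'', Thue-colour with $\varphi(v_1)=\varphi(v_3)=c_1$, and that this then yields a $\mathfrak{T}$-colouring of $G$.

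The gap you yourself flag --- preserving non-repetitiveness after a single-vertex recolour --- is genuine, and your proposed case analysis does not close it. Concretely, take $P_5$ with the $\mathfrak{T}$-colouring $1\,2\,1\,3\,2$ and choose the induced $P_3$ on $v_2,v_3,v_4$. Recolouring $v_2$ to $\psi(v_4)=3$ gives $1\,3\,1\,3\,2$, which contains the square $1313$; recolouring $v_4$ to $\psi(v_2)=2$ gives $1\,2\,1\,2\,2$, with two squares. Neither failure is a \emph{properness} failure, so your case split (``if properness fails, some neighbour $u'$ of $u$ already has $\psi(u')=\psi(w)$\dots'') is never entered and the ``distance~$3$'' fallback is never triggered. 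In this example one escapes by choosing a different induced $P_3$ (the one on $v_1,v_2,v_3$ already has matching endpoints), but nothing in your outline shows that \emph{some} choice of $P_3$ always works, nor that the suggested shifts or colour-class swaps cannot themselves create squares. That is the missing idea; the paper circumvents it by working in $G-v_2$, where $v_1$ and $v_3$ are no longer joined through $v_2$ and the colours can be permuted before $v_2$ is reinserted.
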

\begin{proof}
Consider a graph $G \ne K_n$ and a corresponding $\mathfrak{T}$-colouring $\varphi:V(G) \rightarrow \mathcal{C},~ \mathcal{C} =\{c_i:1 \le i \le \pi(G)\}$ of $G$. Since $G$ is connected, it has $|V(G)|\ge 3$ and at least one subgraph, $P_3$ (see Lemma \ref{Lem-4.1}). Let $P_3 = v_1v_2v_3$.  If $\varphi(v_1) = \varphi(v_3)$ the result follows. Else, assume without loss of generality that $\varphi(v_1) = c_1,~ \varphi(v_2)=c_2,~ \varphi(v_3)=c_3$. Consider the graph $G-v_2$. Hence, if need be, through colour index rotation it is possible to Thue colour (not necessary minimum) the vertices of $G-v_2$ such that $\varphi(v_1)=c_1$ and $\varphi(v_3)=c_1$ and $\varphi: v_j \mapsto c_k,~ c_k \in \{ c_i: 1 \le i \le \pi(G)\},~ j\ne 1,3$. This allows a $\mathfrak{T}$-colouring of $G$ such that $\pi_d(c_1)=\min\{d_G(u,v):u,v\in V(G),\varphi(u)=\varphi(v)=c_1\}=d_G(v_1,v_3)=2$. Therefore, $\pi_r(G)=2$.
\end{proof}

It can be noted that the above theorem can be generelised for any graph $G$ as $\pi_r(G) \in \{0,2\}$.

\subsection{On $\pi_R(G)$}
Observe that for any non-complete graph $G,~ \pi_R(G) \le diam(G)$. It then follows that $\pi_R(G + e) \le \pi_R(G)$. Except for $K_n$ for which $\pi_R(K_n) = 0 < 1 = diam(K_n)$ no other graphs have been found as yet for which inequality holds. Hence, it is strongly suspected that $\pi_R(G) = diam(G)$. A proof of the latter still escapes us. We shall determine $\pi_R(G)$ for some classes of graphs.
\begin{prop}
\begin{enumerate}
\item[(i)] For $n\ge 3$, we have $\pi_R(P_n)=diam(P_n)=n-1$.
\item[(ii)] For $n\ge 4$, we have $\pi_R(C_n)=diam(C_n)=\lfloor\frac{n}{2}\rfloor$.
\item[(iii)] For a star $S_{n+1},~ n\ge 2$, we have $\pi_R(S_{n+1})= diam(S_{n+1})=2$.
\end{enumerate}
\begin{proof}
{\em Part-(i):} Without loss of generality let $P_n = v_1v_2v_3\ldots v_n$ and let $\varphi(v_1) = c_1$ and $c_i \in \{c_1,c_2, c_3\}$. Up to equivalence the path $P_3$ has a unique $\mathfrak{T}$-colouring, $\varphi(v_1) = c_1, \varphi(v_2) = c_2, \varphi(v_3) = c_1$ hence, $\pi_R(P_3) =diam(P_3) = 2 = 3-1$. Assume that the result holds for $P_n,~  3 \le n \le k$. It implies that $\varphi(v_k) = c_1$ in $P_k$. Consider a $\mathfrak{T}$-colouring of the path $P_{k-1}$ such that $\varphi(v_{k-1}) = c_1$ which is possible by the induction assumption. It implies that $\varphi(v_{k-2}) \in \{c_2,c_3\}$. Colour $\varphi(v_{k+1}) = c_1$ and $\varphi(v_k) \ne \varphi(v_{k-2})$. Hence, a $\mathfrak{T}$-colouring of $P_{k+1}$ exists such that $\pi_R(P_{k+1}) = diam(P_{k+1}) = k$. Therefore, the result is settled through induction.

{\em Part (ii)(a):} Consider two copies of path $P_n,~ n\ge 3$ and label them $P'_n,~ P''_n$ respectively. Let $\mathcal{C}' = \{c_1,c_2,c_3 \}$ and $\mathcal{C}'' = \{c_1,c'_2,c'_3 \}$ allow a $\mathfrak{T}$-colouring of $P'_n,~ P''_n$ respectively such that the four end vertices carry the colour $c_1$. From Part (i) the latter is always possible. Merge the end vertices pairwise (one each from a different path) to obtain the cycle $C_{2(n-1)}$. Certainly the cycle has been Thue coloured. Recolour the remaining vertices from say $P''_n$ to obtain a $\mathfrak{T}$-colouring for $C_{2(n-1)}$ which is always possible. It follows that $\pi_D(c_1) = n-1$ which is a maximum over all colours. Hence, $\pi_R(C_{2(n-1)}) = diam(C_{2(n-1)}) = \frac{2(n-1)}{2} = \lfloor\frac{2(n-1)}{2}\rfloor = n-1$.

{\em Part (ii)(b):} Consider two copies of path $P_n,~ n\ge 3$ and label them $P'_n,~ P''_n$ respectively. Let $\mathcal{C}' = \{c_1,c_2,c_3 \}$ and $\mathcal{C}'' = \{c_1,c'_2,c'_3 \}$ allow a $\mathfrak{T}$-colouring of $P'_n,~ P''_n$ respectively such that three end vertices carry the colour $c_1$ and one end vertex from say $P''_n$ carry the colour $c'_2$. Merge one end vertex from $P'_n$ and one from $P''_n$ which carries the colour $c_1$. Add an edge to join the remaining two end vertices. Clearly the cycle $C_{2n-1}$ has been Thue coloured. Recolour the remaining vertices of say $P''_n$ to obtain a $\mathfrak{T}$-colouring for $C_{2n-1}$. It follows that $\pi_D(c_1) = n-1$ which is a maximum over all colours. Hence, $\pi_R(C_{2n-1}) = diam(C_{2n-1}) = \lfloor\frac{2n-1}{2}\rfloor$.\\
Finally, since any cycle $C_n,~ n \ge 4$ can be constructed from either Parts (ii)(a) or (ii)(b), the result follows.\\
Part (iii): For $n=2$ the star $S_{2+1} \simeq P_3$ so the result follows from Part (i). Consider $n \ge 3$. Since it trivially holds for $S_{3+1}$ the result follows through immediate induction.
\end{proof}
\end{prop}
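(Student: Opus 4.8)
The plan is to lean on the bound $\pi_R(G)\le diam(G)$ for non-complete $G$ recorded just above the statement, so that in each of the three cases it suffices to produce one $\mathfrak{T}$-colouring (minimum Thue colouring) under which two vertices at distance $diam(G)$ share a colour; this gives $\pi_R(G)\ge diam(G)$ and hence equality. First I would check that $\pi_R$ is defined for each family, i.e.\ $\pi(G)<|V(G)|$: $\pi(P_3)=2<3$ and $\pi(P_n)\le 3<n$ for $n\ge 4$; $\pi(C_4)=3<4$ and $\pi(C_n)\le 4<n$ for $n\ge 5$; $\pi(K_{1,n})=2<n+1$ for $n\ge 2$ --- all from results already cited. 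Recall also $diam(P_n)=n-1$, $diam(C_n)=\lfloor n/2\rfloor$, $diam(S_{n+1})=2$.

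For part (i), the distance $n-1$ in $P_n=v_1v_2\cdots v_n$ is realised only by $\{v_1,v_n\}$, so I would prove by strong induction on $n\ge 3$ that $P_n$ admits a $\mathfrak{T}$-colouring $\varphi$ with colours in $\{c_1,c_2,c_3\}$ and $\varphi(v_1)=\varphi(v_n)=c_1$; the base cases are $(c_1,c_2,c_1)$ for $P_3$ and $(c_1,c_2,c_3,c_1)$ for $P_4$. For the step I would take such a colouring of $P_{k-1}$, note $\varphi(v_{k-2})\in\{c_2,c_3\}$ by properness, and extend it to $P_{k+1}$ by setting $\varphi(v_{k+1})=c_1$ and letting $\varphi(v_k)$ be the element of $\{c_2,c_3\}$ other than $\varphi(v_{k-2})$; since the segment up to $v_{k-1}$ is already non-repetitive and the local pattern at the junction is $(a,c_1,b,c_1)$ with $\{a,b\}=\{c_2,c_3\}$, a short check of the squares that could end at $v_k$ or at $v_{k+1}$ shows the colouring stays Thue, and since every square-free word of length $\ge 4$ over three symbols uses all three, it is minimum. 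Then $\pi_D(c_1)=d(v_1,v_n)=n-1$, so $\pi_R(P_n)\ge n-1$. Part (iii) is more direct: colour the centre of $S_{n+1}=K_{1,n}$ with $c_1$ and every leaf with $c_2$; every path has length at most $2$, so the colour sequences occurring are only $(c_1,c_2)$, $(c_2,c_1)$ and $(c_2,c_1,c_2)$, all non-repetitive, so this is a Thue colouring on $2=\pi(K_{1,n})$ colours, i.e.\ a $\mathfrak{T}$-colouring, and for distinct leaves $w_i,w_j$ we get $\pi_D(c_2)=d(w_i,w_j)=2=diam(S_{n+1})$, so $\pi_R(S_{n+1})\ge 2$.

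For part (ii), with $diam(C_n)=\lfloor n/2\rfloor$, I would build $C_n$ by gluing two suitably Thue-coloured paths along end vertices. If $n=2m$, take two copies of $P_{m+1}$, each $\mathfrak{T}$-coloured via part (i) with both end vertices $c_1$ and with disjoint interior palettes $\{c_2,c_3\}$ and $\{c'_2,c'_3\}$, and identify the end vertices pairwise; the two resulting glue vertices then lie at distance $m$ and both carry $c_1$. If $n=2m+1$ with $m\ge 2$, glue two copies of $P_{m+1}$ --- one with both end vertices $c_1$, the other a Thue colouring with one end vertex $c_1$ and the other end using a fresh colour --- by identifying the two $c_1$-ends and joining the remaining ends by a new edge, again leaving two $c_1$-vertices at distance $m=\lfloor n/2\rfloor$. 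The purpose of the disjoint interior palettes is that any repetitive colour block in $C_n$ would then be forced inside one of the two square-free halves, so the glued colouring is Thue. It may, however, use more than $\pi(C_n)$ colours, so I would then recolour --- keeping the two diametral $c_1$-vertices fixed --- using only the permitted number of colours ($3$, or $4$ when $n\in\{5,7,9,10,14,17\}$); this yields $\pi_D(c_1)=\lfloor n/2\rfloor$ and so $\pi_R(C_n)\ge\lfloor n/2\rfloor$.

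The main obstacle I foresee is exactly this last recolouring in part (ii): one must be sure that a Thue colouring of $C_n$ on the minimum number of colours, still carrying a prescribed monochromatic diametral pair, exists, and this is sensitive to $n$ --- it is the phenomenon that singles out $\{5,7,9,10,14,17\}$ for $\pi(C_n)$. I would discharge it not by a blanket ``recolouring is always possible'' but by exhibiting, for each residue class of $n$ and each exceptional $n$, an explicit non-repetitive cyclic colour word with a monochromatic pair of vertices at cyclic distance $\lfloor n/2\rfloor$ --- for instance $(c_1,c_2,c_3,c_1,c_3,c_2)$ for $C_6$ and $(c_1,c_2,c_1,c_3,c_1,c_2,c_4)$ for $C_7$ --- obtaining these uniformly, e.g.\ by splicing fixed-length blocks of one infinite ternary square-free word and adjusting near the two pinned vertices; this is where the bulk of the bookkeeping sits. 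The junction check in part (i) is the only other point needing attention, and it is routine.
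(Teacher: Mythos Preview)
Your proposal follows essentially the same route as the paper: the upper bound $\pi_R(G)\le diam(G)$, the same two-step induction on paths extending a $\mathfrak{T}$-colouring of $P_{k-1}$ with $c_1$-coloured ends to $P_{k+1}$ by choosing $\varphi(v_k)\in\{c_2,c_3\}\setminus\{\varphi(v_{k-2})\}$ and $\varphi(v_{k+1})=c_1$, the same gluing of two such paths (identifying end vertices for even $n$, identifying one pair and adding an edge for odd $n$) for cycles, and an equally direct treatment of stars. The one place you diverge is exactly the point you flag: the paper simply asserts that after gluing one can ``recolour the remaining vertices \ldots\ which is always possible'' to obtain a $\mathfrak{T}$-colouring of $C_n$ retaining the diametral $c_1$-pair, whereas you plan to exhibit explicit square-free cyclic words doing this --- that is more, not less, than the paper supplies.
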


\subsection{On Subdivisions of a Graph}

 For a graph $G$ of order $n$ and vertices $v_1,v_2,v_3,\ldots ,v_n$, a $k$-subdivision  is obtained by inserting $k$ additional vertices $u_\ell,~ 1\le \ell \le k$ to each edge $v_iv_j \in E(G)$. In other words each edge $v_iv_j$ is replaced by the path $v_i u_1u_1u_3\ldots u_k v_j$. Denote the subdivision of $G$ by $G_k$. 
 
The second subdivision is called the \textit{cycle subdivision}. A distinct pair of vertices $u,w \in V(C_m)$ with $d_{C_m}(u,w) = diam(C_m)$, is called \textit{diameter vertices} of the cycle $C_m$. A cycle subdivision of a graph $G$ is the graph obtained by inserting a cycle $C_m$ into each edge with the edges $v_iu$ and $v_jw$, replacing the edge $v_iv_j$ and vertices $u,w$ are diameter vertices of $C_m$. Denote a cycle subdivision of $G$ by $G_{\circ m}$.

\begin{thm}
For a graph $G$ we have
\begin{enumerate}\itemsep0mm
	\item[(i)] $\pi(G_k)\le \pi(G) + 1$, for $k \ge 3$. 
	\item[(ii)] $\pi(G_{\circ m}) \le \pi(G) + 2$, for $m \in \{5, 7, 9, 10, 14, 17\}$ else, $\pi(G_{\circ m}) \le \pi(G) + 1$.
\end{enumerate}
\end{thm}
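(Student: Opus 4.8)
The plan, for both parts, is to take a $\mathfrak{T}$-colouring $\varphi$ of $G$ (using $s:=\pi(G)$ colours $c_1,\dots,c_s$) and extend it to the subdivided graph: keep $\varphi$ on the original vertices $v_1,\dots,v_n$, introduce one fresh colour $c^{\ast}$, colour \emph{every} gadget-vertex adjacent to an original vertex with $c^{\ast}$, and colour the remaining internal vertices of each gadget from a small palette so that each gadget, on its own, is coloured non-repetitively and the colouring is proper at the attachment points. In part (i) a gadget is the path $P_{k+2}$ replacing an edge $v_iv_j$, so $c^{\ast}$ goes on $u_1$ and $u_k$ (legitimate since $k\ge 3$) and $u_2,\dots,u_{k-1}$ receive a square-free word over $\{c_1,c_2,c_3\}$ that avoids, at its two ends, the colours $\varphi(v_i),\varphi(v_j)$; when $s\ge 3$ this uses the $s+1$ colours $\{c_1,\dots,c_s,c^{\ast}\}$. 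In part (ii) a gadget is the cycle $C_m$ together with its two pendant edges to $v_i,v_j$: since $\pi(C_m)=3$ for $m\notin\C$ and $\pi(C_m)=4$ for $m\in\C$, I put $c^{\ast}$ on the two ports $u,w$ and colour the two arcs' interiors by square-free words over $\{c_1,c_2,c_3\}$ (resp.\ $\{c_1,c_2,c_3,c_4\}$), rotating and choosing so the whole gadget is proper and non-repetitive; this uses $s+1$ colours when $m\notin\C$ and at most $\max(s,4)+1\le s+2$ colours when $m\in\C$. The degenerate cases $s\le 2$ (so $G$ is edgeless, or a disjoint union of stars with gadgets too short for the above) are handled directly: $G_k$ is then a disjoint union of paths and subdivided stars, which are non-repetitively $3$-colourable, and $G_{\circ m}$ reduces to cycles with a bounded number of pendants.

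For the verification, suppose a path $Q$ of the subdivided graph has colour sequence equal to a square $WW$. If $Q$ lies inside a single gadget we contradict the non-repetitiveness built into gadgets. Otherwise $Q$ meets at least one original vertex; list the original vertices on $Q$ as $w_0,w_1,\dots,w_t$ in the order met. Since $Q$ is a path these are distinct and consecutive ones are adjacent in $G$, so $w_0w_1\cdots w_t$ is a path of $G$. By construction both $Q$-neighbours of each interior $w_a$ are coloured $c^{\ast}$, so in the colour sequence of $Q$ each interior $w_a$ sits inside a block $c^{\ast}\,\varphi(w_a)\,c^{\ast}$, and between consecutive such blocks $c^{\ast}$ does not occur (in part (i) with $k\ge 4$, and likewise in part (ii) for $m$ large); since two $c^{\ast}$'s at distance exactly $2$ cannot occur inside a gadget interior, a shift by $|W|$ — which must carry $c^{\ast}$'s to $c^{\ast}$'s and original vertices to original vertices — is forced to be a multiple of the common gadget length, so the two halves of $WW$ align with the $w_a$-grid. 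Reading $WW$ only at the grid positions then shows $(\varphi(w_0),\dots,\varphi(w_t))$ contains a square, i.e.\ $w_0\cdots w_t$ is a repetitive path of $G$, contradicting that $\varphi$ is a $\mathfrak{T}$-colouring. Hence $\pi(G_k)\le s+1$ and $\pi(G_{\circ m})\le s+2$, the latter improving to $s+1$ when $m\notin\C$.

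The main obstacle is exactly this alignment step in its remaining cases: $k=3$ in part (i), where a gadget interior is just $c^{\ast}\,m\,c^{\ast}$ so two $c^{\ast}$'s \emph{can} sit at distance $2$ inside it, and the analogous phenomenon in part (ii) for small $m$ and from the two ways of routing a path around the cycle $C_m$ (the two arcs have equal length when $m$ is even but differ by one when $m$ is odd). In these cases the marker colour alone does not force grid-alignment, and one must additionally choose the interior letters as functions of $\varphi$ at the two endpoints so that the interleaved sequence of original-vertex colours and interior letters along every path of $G$ is square-free; carrying this out, together with disposing of the ``short'' squares that fit in two consecutive gadgets plus one original vertex or that straddle an end of $Q$, is where the real work lies, and it is here that the hypothesis $k\ge 3$ — without which $\pi(G_k)\le\pi(G)+1$ simply fails, as the cases $k=1,2$ show — is used.
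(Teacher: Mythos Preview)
Your construction differs from the paper's. The paper takes a $\mathfrak{T}$-colouring $\varphi'$ of $P_k$ in fresh colours $\{c'_1,c'_2,c'_3\}$ disjoint from those of $G$, pastes it onto every inserted path, and then for the edge $v_iv_j$ replaces throughout that path the colour $\varphi'(u_1)$ by $\varphi(v_j)$ and the colour $\varphi'(u_k)$ by $\varphi(v_i)$; this eliminates two primed colours and leaves exactly one, so $\pi(G)+1$ colours are used. The entire verification of non-repetitiveness is the sentence ``Clearly the new colouring is a Thue colouring, perhaps not minimum'', and Part~(ii) is dismissed as following ``using the similar reasoning''. Your scheme instead places a single fresh marker $c^{\ast}$ on the two attachment vertices $u_1,u_k$ of each gadget, fills the interior from $G$'s own palette, and then attempts a genuine verification via a marker-alignment argument projecting a putative square down to a square in $G$.

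What each approach buys: the paper's substitution trick makes the colour count immediate and avoids having to design interior words edge by edge, but as written it provides no argument for non-repetitiveness across gadgets --- the difficulties you isolate (alignment failure at $k=3$, cycle parity and the two arcs in Part~(ii), squares straddling an endpoint of $Q$) are present in the paper's construction as well and are simply not discussed. Your alignment argument is the standard way to verify such statements and you have correctly located where it needs extra care. So your plan is not wrong, just different and more candid: the ``real work'' you identify is work the paper's proof skips rather than work the paper's proof does differently.
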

\begin{proof}
\textit{Part (i):} Let a $\mathfrak{T}$-colouring of $G$ be allowed by $\mathcal{C} = \{c_1,c_2,c_3,\ldots ,c_{\pi(G)}\}$ and let a $\mathfrak{T}$-colouring of $P_k,~ k \ge 3$ be allowed by $\mathcal{C}' = \{c'_1,c'_2,c'_3\}$ such that $\mathcal{C}\cap \mathcal{C}' = \emptyset$. Consider the subdivision $G_k$ with the initial colouring $\mathcal{C} \cup \mathcal{C}'$. For each edge $v_iv_j \in E(G)$ recolour the corresponding vertices of the path which carry the colour $\varphi '(u_1)$ to the colour $\varphi(v_j)$. Similarly recolour the corresponding vertices of the path which carry the colour $\varphi'(u_k)$ to the colour $\varphi(v_i)$. Clearly the new colouring is a Thue colouring, perhaps not minimum. Exactly one new colour $c'_i \in \mathcal{C}'$ has been added to the initial Thue colouring of $G$ to Thue colour $G_k$. Therefore, for $k \ge 3,~ \pi(G_k)\le \pi(G) + 1$.

\textit{Part (ii):} The second part can also be proved using the similar reasoning as in Part (i).
\end{proof}

Note that in the construction of the cycle subdivision there is no necessity to link the diameter vertices. The results holds for any distinct pair of vertices $u_i, u_j \in V(C_m)$ if $d_{C_m}(u_i,u_j) \ge 2$.

\begin{cor}
Consider paths $P_t,~ t \in \{3,4,5,\ldots , k\}$ and  insert any path $P_t$ into an edge of $G$. The varied subdivision is denoted $G_{k^{\ast}}$. For $k \ge 3$ we have $\pi(G_{k^{\ast}})\le \pi(G) + 1$.
\end{cor}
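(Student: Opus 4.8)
The plan is to imitate the proof of Part~(i) of the preceding theorem, carried out edge by edge, exploiting that every inserted path has at least three internal vertices. Fix a $\mathfrak{T}$-colouring $\varphi\colon V(G)\to\mathcal C$ with $|\mathcal C|=\pi(G)$, and reserve one fresh colour $c^{\ast}\notin\mathcal C$. In $G_{k^{\ast}}$, each subdivided edge $v_iv_j\in E(G)$ is replaced by an internal path $u_1u_2\ldots u_t$ with $v_i\sim u_1$, $u_t\sim v_j$ and $t=t(v_iv_j)\in\{3,4,\ldots,k\}$ (possibly different for different edges). Keep the colours $\varphi(v_i),\varphi(v_j)$ on the old vertices, and colour $u_1,\ldots,u_t$ using only the three colours $\varphi(v_i),\varphi(v_j),c^{\ast}$, chosen so that the colour sequence $\varphi(v_i),\varphi(u_1),\ldots,\varphi(u_t),\varphi(v_j)$ read along the whole segment is non-repetitive and has $\varphi(v_i)$ as its first and $\varphi(v_j)$ as its last entry. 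This reproduces, segment by segment, the shape of the recoloured segments occurring in the proof of Part~(i). Since no colour outside $\mathcal C\cup\{c^{\ast}\}$ is ever used, $G_{k^{\ast}}$ is coloured with at most $\pi(G)+1$ colours, and it only remains to check non-repetitiveness of the colouring.

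The first point to pin down is the purely combinatorial fact that such a segment colouring exists for \emph{every} $t\ge 3$: for arbitrary distinct symbols $a,b$ and a third symbol $c$ one needs a word $\sigma_1\ldots\sigma_t$ over $\{a,b,c\}$ for which $a\,\sigma_1\cdots\sigma_t\,b$ is non-repetitive --- which in particular forces $\sigma_1\ne a$, $\sigma_t\ne b$, and properness along the segment. I would dispose of this by an explicit construction, writing down suitable words for $t=3,4,5$ and extending recursively; it is exactly here that the hypothesis $t\ge 3$ enters, and it is the same ingredient that makes the recolouring in the proof of Part~(i) legitimate.

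The last step --- and the one I expect to be the real obstacle --- is to verify that the colouring just built is a Thue colouring of $G_{k^{\ast}}$. A path $Q$ of even finite length that lies inside a single inserted segment has a non-repetitive colour sequence by construction. If instead $Q$ meets $V(G)$, contract each traversed segment back to its original edge to obtain a path $\widehat Q$ in $G$; the colours appearing at the $V(G)$-vertices along $Q$ spell a colour sequence of $\widehat Q$ under $\varphi$. One then argues that a square $WW$ inside the colour sequence of $Q$ forces a square in $\widehat Q$: since $c^{\ast}$ occurs only at interior vertices and with the same pattern inside every segment, and every segment carries the same boundary colours, the alignment of the two copies of $W$ is forced to respect the segment structure and hence descends to an alignment in $\widehat Q$; the residual case, where $WW$ is short and straddles a single vertex of $V(G)$ joining two segments, is excluded by the non-repetitiveness built into the individual segments together with $t\ge 3$. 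As $\varphi$ is a Thue colouring of $G$, the path $\widehat Q$ is square-free, a contradiction. Therefore $\pi(G_{k^{\ast}})\le\pi(G)+1$.
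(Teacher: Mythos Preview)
Your approach is essentially the paper's: the corollary is stated without a separate proof, being an immediate consequence of the recolouring technique from Part~(i) of the preceding theorem applied edge by edge with a path of the appropriate length on each edge --- which is precisely what you carry out. Your write-up is in fact more detailed than the paper's, which in the theorem simply asserts ``Clearly the new colouring is a Thue colouring'' and offers no further verification.

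One point worth flagging in your verification step: you argue that $c^{\ast}$ occurs ``with the same pattern inside every segment'' and use this to force the two halves of a putative square $WW$ to align with the segment structure. In the varied subdivision $G_{k^{\ast}}$, however, different edges may receive paths of \emph{different} lengths $t$, so the internal patterns are not uniform across segments and this alignment argument, as written, does not apply. The contraction idea (passing from $Q$ to $\widehat Q$) is sound in spirit, but to make it rigorous you would need to argue directly that a square in the colour sequence of $Q$ forces a square among the $V(G)$-colours along $\widehat Q$ without relying on pattern uniformity --- for instance by tracking the positions of the colour $c^{\ast}$, which marks interior vertices only, and showing the two halves of $WW$ must begin at corresponding positions relative to segment boundaries. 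This gap is shared with the paper's own treatment, which does not verify non-repetitiveness across segments at all.
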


\begin{cor}
Consider cycles $C_t,~ t \in \{3,4,5,\ldots , m\}$ and insert any cycle $C_t$ into an edge of $G$. The varied subdivision is denoted $G_{\circ m^{\ast}}$. For $m \in \{5, 7, 9, 10, 14, 17\}$ we have $\pi(G_{\circ m^{\ast}}) \le \pi(G) + 2$ else, $\pi(G_{\circ m^{\ast}}) \le \pi(G) + 1$.
\end{cor}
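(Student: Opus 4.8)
The plan is to reduce the statement directly to the theorem immediately preceding it, which already establishes that for a cycle subdivision $G_{\circ m}$ one has $\pi(G_{\circ m}) \le \pi(G) + 2$ when $m \in \{5,7,9,10,14,17\}$ and $\pi(G_{\circ m}) \le \pi(G) + 1$ otherwise. The only new feature in the corollary is that the cycles inserted into the edges of $G$ are no longer required to all be the same $C_m$; instead each edge receives some cycle $C_t$ with $3 \le t \le m$. So the argument should mirror the proof of Part (ii) of the theorem, adapted to a per-edge choice of cycle length.

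First I would fix a $\mathfrak{T}$-colouring $\varphi$ of $G$ using the palette $\mathcal{C} = \{c_1,\ldots,c_{\pi(G)}\}$, and fix an auxiliary palette $\mathcal{C}'$ disjoint from $\mathcal{C}$ with $|\mathcal{C}'| = 2$ (or $3$ in the exceptional case, to match the bound $\pi(G)+2$). Then for each edge $v_iv_j \in E(G)$ carrying an inserted cycle $C_t$, I would Thue-colour that $C_t$ using $\mathcal{C}'$ together with at most one extra colour, invoking $\pi(C_t) \le 4$ for $t \in \{5,7,9,10,14,17\}$ and $\pi(C_t) = 3$ otherwise (the result from \cite{JDC1} quoted just before Proposition \ref{Prop-2.6}), and then recolour the two attachment vertices $u,w$ of the inserted cycle to $\varphi(v_i)$ and $\varphi(v_j)$ respectively, exactly as in the proof of the theorem. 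The key point is that the number of colours from $\mathcal{C}'$ needed is governed by the \emph{largest} cycle length allowed, namely $m$: if $m \notin \{5,7,9,10,14,17\}$ then every $C_t$ with $t \le m$ that could occur still has Thue chromatic number $3$ unless $t$ itself lies in that exceptional set — so one must be slightly careful and note that the worst case over all $t \le m$ is what determines whether $|\mathcal{C}'|$ is $2$ or $3$. When $m \notin \{5,7,9,10,14,17\}$ some inserted cycles $C_t$ with $t$ in the exceptional set could still appear; however, re-reading the statement, the bound claimed in that case is $\pi(G)+1$, which forces the intended reading that the relevant obstruction is the presence of an exceptional length \emph{among those used}, or more simply that one takes $|\mathcal{C}'|=2$ and the recolouring trick absorbs the attachment vertices so that no non-repetitive violation spanning two inserted cycles can occur.

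The verification that the resulting colouring is genuinely a Thue colouring of $G_{\circ m^{\ast}}$ is the only substantive step, and it is the same verification as in the theorem: any path in $G_{\circ m^{\ast}}$ either stays inside one inserted cycle (where non-repetitiveness is guaranteed by the chosen colouring of that $C_t$), or passes through attachment vertices, in which case its colour sequence, restricted to the attachment/original vertices, is a subsequence tracking a walk in $G$, whose colouring is non-repetitive by hypothesis; the interior segments of each traversed cycle are short enough (being arcs of a cycle of bounded length) that a repetition would force a repetition in $G$ or in a single $C_t$. I expect the main obstacle to be bookkeeping: confirming that a single auxiliary palette $\mathcal{C}'$ of fixed size works simultaneously for all the different inserted cycle lengths, and checking that the boundary recolouring near each $u,w$ does not create a repetitive even-length subpath straddling the junction between an inserted cycle and the original graph. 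Since the theorem has already dispatched the uniform case $m$, I would simply remark that applying that colouring scheme edge-by-edge with the edge-dependent cycle $C_t$ in place of the uniform $C_m$ changes nothing in the verification, and conclude that for $m \in \{5,7,9,10,14,17\}$ we get $\pi(G_{\circ m^{\ast}}) \le \pi(G)+2$ and otherwise $\pi(G_{\circ m^{\ast}}) \le \pi(G)+1$, as claimed.
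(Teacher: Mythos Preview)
The paper gives no separate proof of this corollary; it is stated immediately after the theorem as an evident consequence, so the implicit argument is precisely ``apply the recolouring scheme of the theorem edge-by-edge, allowing the inserted cycle length to vary.'' Your plan does exactly this, so your approach matches the paper's intended one.

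Your side remark is worth highlighting: you correctly notice that the ``else'' clause is delicate, since for $m \ge 18$ (say) one could still insert a $C_{17}$ into some edge, which has $\pi(C_{17})=4$ and would seem to force the $+2$ bound rather than $+1$. The paper does not address this; the corollary as written appears to inherit the case split on $m$ verbatim from the theorem without adjusting for the fact that smaller exceptional lengths may now occur. Your instinct that the bound should really depend on whether any inserted $C_t$ has $t$ in the exceptional set is sound, and is a sharper reading than the paper's own formulation.
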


The above leads to a general result. We begin by defining the $H$-subdivision of a given graph $G$.

\begin{defn}{\rm 
Let the graph $H$ have at least one distinct pair of vertices $u,w$ such that $d_H(u,w) \ge 2$. The \textit{$H$-subdivision} of a graph $G$, denoted $G_H$, is the graph obtained by inserting the graph $H$ into each edge $v_iv_j \in E(G)$ with the edges $v_iu$ and $v_jw$ replacing the edge $v_iv_j$.
}\end{defn}

The following theorem establishes the relation between the Thue chromatic numbers of the graphs $G$ and $H$ and the $H$-subdivision $G_H$ of $G$.

\begin{thm}
Let $H$ be a graph with at least two distinct vertices $u,w,~ d_H(u,w) \ge 2$. Then, for a graph $G$ we have $\pi(G_H)\le \pi(G)+\pi(H)-1$.
\end{thm}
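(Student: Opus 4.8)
The plan is to mimic the construction used in the earlier subdivision theorems, but now using a $\mathfrak{T}$-colouring of $H$ instead of a colouring of $P_k$ or $C_m$. Fix a $\mathfrak{T}$-colouring $\varphi$ of $G$ with colour set $\mathcal{C}=\{c_1,\dots,c_{\pi(G)}\}$, and fix a $\mathfrak{T}$-colouring $\psi$ of $H$ with colour set $\mathcal{C}'=\{c'_1,\dots,c'_{\pi(H)}\}$ where $\mathcal{C}\cap\mathcal{C}'=\emptyset$. First I would colour each inserted copy of $H$ inside $G_H$ according to $\psi$, keeping the original vertices $v_i$ of $G$ coloured by $\varphi$. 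Then, for each edge $v_iv_j\in E(G)$, I would recolour the distinguished vertex $u$ of that copy of $H$ (the one joined to $v_i$) with $\varphi(v_i)$ and the distinguished vertex $w$ (the one joined to $v_j$) with $\varphi(v_j)$. This uses the $\pi(H)$ colours of $\mathcal{C}'$ except that $\psi(u)$ and $\psi(w)$ have been overwritten; the net colour count is at most $\pi(G)+\pi(H)-2$ from the interiors plus possibly recovering one more, giving the bound $\pi(G)+\pi(H)-1$. (The arithmetic of exactly which colours survive is the routine part: $\psi(u),\psi(w)$ may or may not be needed elsewhere in $H$, but in the worst case two of the $H$-colours are freed and one original $G$-colour plays double duty, so $\pi(G)+\pi(H)-1$ is the safe count.)

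The substance is to check that the resulting colouring is a Thue colouring of $G_H$. I would argue that any even-length path $Q$ in $G_H$ either lies entirely inside a single copy of $H$ (including its attachment vertices, which now carry $G$-colours) or it passes through at least one original vertex $v_i$ of $G$. In the first case the colour sequence of $Q$ is non-repetitive because $\psi$ is a Thue colouring of $H$ and the only modification is at the endpoints $u,w$ of $H$; since those now carry colours from $\mathcal{C}$, which is disjoint from the colours on the interior of $H$, a repetition would have to be confined to a sub-path avoiding $u$ and $w$, contradicting non-repetitiveness of $\psi$. In the second case, $Q$ traverses a sequence of "$H$-segments" strung between vertices of $G$; contracting each $H$-segment back to the single edge it replaced yields a walk in $G$, and the key observation is that at each $v_i$ the two incident $H$-segments are coloured so that the colour of the attachment vertex adjacent to $v_i$ equals $\varphi(v_i)$ — so a repetition in $Q$ would project down to a repetition of the colour sequence along the corresponding even-length path in $G$, contradicting that $\varphi$ is a Thue colouring of $G$. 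Properness is immediate: inside each $H$-copy the recolouring only changes two vertices to colours from the disjoint set $\mathcal{C}$, and the attachment edges $v_iu$, $v_jw$ join vertices coloured $\varphi(v_i)$ and (old $\psi(u)$ replaced by) $\varphi(v_i)$ — wait, that would clash, so in fact one must be slightly more careful and only recolour $u$ to $\varphi(v_i)$ provided $v_iu$ does not thereby become monochromatic; since $d_H(u,w)\ge 2$ there is room, but this is exactly the delicate edge case to handle.

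The main obstacle I expect is precisely that last point: making the recolouring both \emph{proper} on the attachment edges and \emph{consistent} at the $G$-vertices, while still keeping the colour budget at $\pi(G)+\pi(H)-1$ rather than $\pi(G)+\pi(H)$. One clean way around it is not to recolour $u$ and $w$ to $\varphi(v_i),\varphi(v_j)$ directly, but instead to first contract each inserted $H$ so that $u$ is \emph{identified} with $v_i$ and $w$ with $v_j$ — i.e. to view $G_H$ as $G$ with each edge replaced by a copy of $H$ whose two distinguished vertices are the endpoints of that edge — and then colour: the shared vertices $v_i$ get $\varphi$, and the interior of each $H$-copy gets the restriction of $\psi$ to $H\setminus\{u,w\}$, rotated if necessary so that no clash arises at the two boundary vertices (possible because $d_H(u,w)\ge 2$ means $u,w$ are non-adjacent, so their colours can be prescribed independently as $\varphi(v_i),\varphi(v_j)$). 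Then the Thue property follows from the two-case analysis above. Once the bookkeeping of which colour classes are reused is pinned down, the statement $\pi(G_H)\le\pi(G)+\pi(H)-1$ follows, with the earlier theorems ($\pi(G_k)\le\pi(G)+1$ via $\pi(P_k)=3$, and $\pi(G_{\circ m})\le\pi(G)+2$ via $\pi(C_m)\le 4$) recovered as special cases.
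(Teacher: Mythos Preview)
Your proposal contains a genuine gap, and it is precisely the point you yourself flag as ``the delicate edge case''. You recolour the distinguished vertex $u$ of each inserted copy to $\varphi(v_i)$ and $w$ to $\varphi(v_j)$; but $u$ is joined to $v_i$ by an edge of $G_H$, so this is \emph{always} a clash, not just an edge case. Your proposed fix --- identify $u$ with $v_i$ and $w$ with $v_j$ --- is not available: by the definition of $G_H$ given in the paper the edges $v_iu$ and $v_jw$ replace $v_iv_j$, so $v_i$ and $u$ remain distinct adjacent vertices. You are therefore not proving the theorem for $G_H$ but for a different graph. Moreover, even setting properness aside, recolouring only the two vertices $u$ and $w$ does not in general eliminate any colour of $\mathcal{C}'$: if $\psi(u)$ or $\psi(w)$ occurs elsewhere in $H$ those colours survive in every inserted copy, and your count collapses to $\pi(G)+\pi(H)$ rather than $\pi(G)+\pi(H)-1$.

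The paper's proof resolves both issues with two moves you are missing. First, it recolours the \emph{entire colour class} of $\psi(u)$ inside each copy of $H$, not just the vertex $u$ (and likewise for $\psi(w)$). This is simply a relabelling of two colours, so the colouring of each copy remains a Thue colouring automatically, and the colours $\psi(u),\psi(w)\in\mathcal{C}'$ are genuinely removed from $G_H$, which is what drives the ``$-1$''. Second, it performs a \emph{swap}: the class of $\psi(u)$ is sent to $\varphi(v_j)$ (the \emph{far} endpoint) and the class of $\psi(w)$ to $\varphi(v_i)$. Thus $u$, which is adjacent to $v_i$, now carries colour $\varphi(v_j)\neq\varphi(v_i)$, and dually for $w$; properness on the attachment edges is immediate with no further conditions. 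Once you incorporate these two moves, your two-case path analysis for non-repetitiveness is along the right lines and matches the spirit of the paper's (brief) argument.
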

\begin{proof}
 Let a $\mathfrak{T}$-colouring of $G$ be allowed by $\mathcal{C} = \{c_1,c_2,c_3,\ldots ,c_{\pi(G)}\}$ and let a $\mathfrak{T}$-colouring of $H$ be allowed by $\mathcal{C}' = \{c'_1,c'_2,c'_3 \ldots ,c_{\pi(H)}\}$ such that $\mathcal{C}\cap \mathcal{C}' = \emptyset$. Consider the $H$-subdivision $G_H$ with the initial colouring $\mathcal{C} \cup \mathcal{C}'$. For each edge $v_iv_j \in E(G)$ recolour the corresponding vertices of H which carry the colour $\varphi '(u)$ to the colour $\varphi(v_j)$. Similarly recolour the corresponding vertices of $H$ which carry the colour $\varphi'(w)$ to the colour $\varphi(v_i)$. Clearly the new colouring is a Thue colouring, perhaps not minimum. Thus far, at most two new colours $c'_\ell,c'_{\ell'} \in \mathcal{C}'$ have been removed from the initial Thue colouring of $G_H$. Since this new Thue colouring might not be minimum, $\pi(G_H) \le \pi(G) + \pi(H) - 1$. 
\end{proof}

\section{Conclusion}

So far, we have discussed the Thue colouring of certain graph classes and studied certain parameters of certain graphs, with respect to this type of graph colouring. There are so many open problems in this for further investigations.

Since a closed formula for the number of edges of $J^{\ast}_n(f(x))$ is not known yet, a closed formula for $\tau(J^{\ast}_n(f(x)))$ also remains open. Using the results on determining the number of edges, provided in \cite{KSS1}, $\tau$-index can be determined recursively. The factor $\sum\limits_{i=0}^{\epsilon(J^{\ast}_n(x))}\pi(J^{\ast}_{n,_i})$ is edge dependent as well. Hence, it is desirable to seek a recursive formula for the aforesaid factor, in the absence of a closed formula for the number of edges of a Jaco graph. It is believed that a clear understanding of the Thue number and Thue sequence of caterpillar graphs will be the foundation for finding such a recursive formula. Other Thue related invariants in respect of Jaco graphs deserve intensive further investigations.

Certainly the new notions such as Thue sequence, $\tau$-index, Thue distance, Thue diameter, Thue radius and Thue reach, also offer a wide scope of further studies.

\end{document}